\newtheorem*{define}{Definition}
\newtheorem*{thm}{Theorem}
\newtheorem*{lem}{Lemma}
\newtheorem*{rem}{Remark}
\newtheorem{Define}{Definition}[section]
\newtheorem{Thm}[Define]{Theorem}
\newtheorem{Lem}[Define]{Lemma}
\newtheorem{Cor}[Define]{Corollary}
\newtheorem{Prop}[Define]{Proposition}
\DeclareMathOperator{\C}{\mathbb{C}}
\DeclareMathOperator{\R}{\mathbb{R}}
\DeclareMathOperator{\Z}{\mathbb{Z}}
\DeclareMathOperator{\N}{\mathbb{N}}
\DeclareMathOperator{\Hom}{Hom}
\DeclareMathOperator{\Aut}{Aut}
\DeclareMathOperator{\id}{Id}
\DeclareMathOperator{\SL}{SL}
\DeclareMathOperator{\Tr}{Tr}
\DeclareMathOperator{\Mod}{Mod}
\DeclareMathOperator{\ML}{ML}
\DeclareMathOperator{\Val}{\mathcal{V}}
\DeclareMathOperator{\Curv}{\mathcal{C}}
\title{Automorphisms of character varieties}
\begin{document}

\author{Julien March\'e}
\address{Sorbonne Universit\'e, IMJ-PRG, 75252 Paris c\'edex 05, France}
\email{julien.marche@imj-prg.fr}
\author{Christopher-Lloyd Simon} 
\address{ENS de Lyon, 15 parvis René Descartes, 69342 Lyon C\'edex 07, France}
\email{christopher-lloyd.simon@ens-lyon.fr}
\maketitle

\begin{abstract}
We show that the algebraic automorphism group of the SL$_2(\C)$ character variety of a closed orientable surface with negative Euler characteristic is a finite extension of its mapping class group. Along the way, we provide a simple characterization of the valuations on the character algebra coming from measured laminations.  
\end{abstract}

\section{Introduction}
Let $\Sigma$ be a closed oriented surface of genus $g\ge 2$ and denote $\Mod(\Sigma)$ its mapping class group which by the Dehn-Nielsen-Baer theorem can be identified with $\operatorname{Out}(\pi_1(\Sigma))=\Aut(\pi_1(\Sigma))/\operatorname{Inn}(\pi_1(\Sigma))$.

The meaningful spaces which carry an action of this group often manifest rigidity properties as explained for instance in \cite{Aramayona-souto_rigidity-map_2016} : whenever the mapping class group acts preserving some structure, it is almost the full automorphism group ; celebrated examples are the Teichmuller space with its Kähler structure, or the curve complex with its simplicial structure.

Here we show the same kind of result for the character variety, that is the space $X(\Sigma)$ defined as the algebraic quotient of $\Hom(\pi_1(\Sigma),\SL_2(\C))$ by the conjugation action of $\SL_2(\C)$. 
Recall that this affine variety is constructed from its algebra of functions 
\[
\C[X(\Sigma)]=\C[\Hom(\pi_1(\Sigma),\SL_2(\C))]^{\SL_2(\C)}.
\]

The automorphism group $\Aut(X(\Sigma))$ of the affine variety $X(\Sigma)$ is by definition the group of automorphisms of the $\C$-algebra $\C[X(\Sigma)]$.
The mapping class group acts algebraically on the character variety by setting $[\phi].[\rho]=[\rho\circ \phi^{-1}]$ for $\phi\in \Aut(\pi_1(\Sigma))$ and $\rho\in \Hom(\pi_1(\Sigma),\SL_2(\C))$.

On the other hand, any morphism $\rho:\pi_1(\Sigma)\to\SL_2(\C)$ can be multiplied by a central morphism $\lambda:\pi_1(\Sigma)\to \{\pm \id\}$. This provides an action of $H^1(\Sigma,\Z/2\Z)$ on $X(\Sigma)$. The purpose of this note is to prove the following:

\begin{thm}
When $g\ge 3$, the so defined map $H^1(\Sigma,\Z/2\Z)\rtimes\Mod(\Sigma)\to \Aut(X(\Sigma))$ is an isomorphism.
\end{thm}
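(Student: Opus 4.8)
The plan is to use the characterization of the valuations on the character algebra coming from measured laminations (the input announced in the abstract) together with the rigidity of the mapping class group action on the curve complex. Throughout I work with the trace functions $t_\gamma\in\C[X(\Sigma)]$, $t_\gamma([\rho])=\Tr\rho(\gamma)$, attached to conjugacy classes $\gamma$ of $\pi_1(\Sigma)$: they generate the character algebra, satisfy $t_\gamma=t_{\gamma^{-1}}$ and the fundamental relation $t_{\gamma_1}t_{\gamma_2}=t_{\gamma_1\gamma_2}+t_{\gamma_1\gamma_2^{-1}}$, and the two actions read $\phi\cdot t_\gamma=t_{\phi^{-1}\gamma}$ for $\phi\in\Mod(\Sigma)$ and $\lambda\cdot t_\gamma=\lambda(\gamma)\,t_\gamma$ for $\lambda\in H^1(\Sigma,\Z/2\Z)$. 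Injectivity is then elementary: if $(\lambda,\phi)$ acts trivially then $t_{\phi^{-1}\gamma}=\lambda(\gamma)\,t_\gamma$ for all $\gamma$, so $\phi$ must fix every isotopy class of unoriented simple closed curve, which for $g\ge 3$ forces $\phi=\id$ in $\Mod(\Sigma)=\operatorname{Out}(\pi_1(\Sigma))$ and hence $\lambda=0$.

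For surjectivity, take $\Phi\in\Aut(X(\Sigma))$, extend it to $\Frac(\C[X(\Sigma)])$, and let it act on valuations by pullback $v\mapsto v\circ\Phi^{-1}$. Since the set $\Val$ of valuations coming from measured laminations is intrinsic to the algebra, $\Phi$ permutes $\Val$ and induces a bijection $\Phi_*$ of $\ML(\Sigma)$, homogeneous and hence descending to $\PML(\Sigma)$. The numerical bridge is the formula $v_\mu(t_\gamma)=-\,i(\mu,\gamma)$: the valuations record geometric intersection numbers. I would use it first to show that $\Phi_*$ preserves the rational laminations, namely weighted multicurves, characterized intrinsically as those $\mu$ whose valuation has cyclic value group, and then that it preserves the intersection pairing, in particular disjointness of simple closed curves. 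Thus $\Phi_*$ restricts to an automorphism of the curve complex $\Curv(\Sigma)$, and by Ivanov's theorem, valid for $g\ge 3$, it is realized by a unique $\phi\in\Mod(\Sigma)$.

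Replacing $\Phi$ by $\phi^{-1}\circ\Phi$, I may assume $\Phi_*=\id$ on $\ML(\Sigma)$. Then $v_\mu(\Phi(t_\gamma))=v_\mu(t_\gamma)=-\,i(\mu,\gamma)$ for every lamination $\mu$, so $\Phi(t_\gamma)$ has exactly the valuation vector of $t_\gamma$; since the trace function of a simple closed curve is determined up to a scalar by this vector, a finiteness argument on the finitely generated graded algebra eliminates lower-order terms and gives $\Phi(t_\gamma)=\epsilon(\gamma)\,t_\gamma$ with $\epsilon(\gamma)\in\C^\times$. Feeding this into the fundamental relation for a pair $\alpha,\beta$ of simple closed curves with $i(\alpha,\beta)=1$, so that $\alpha\beta$ and $\alpha\beta^{-1}$ are again simple, yields $\epsilon(\alpha\beta)=\epsilon(\alpha\beta^{-1})=\epsilon(\alpha)\epsilon(\beta)$; running this over a generating set shows that $\epsilon$ is a homomorphism with values in $\{\pm1\}$, that is an element of $H^1(\Sigma,\Z/2\Z)$, and $\Phi$ is the corresponding sign-twist. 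As the $t_\gamma$ of simple closed curves generate the algebra, this identifies $\Phi$ completely.

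The hardest part is the third paragraph: turning the asymptotic information that $\Phi$ fixes all lamination valuations into the rigid conclusion that $\Phi$ scales each $t_\gamma$. This needs genuine finiteness control on $\C[X(\Sigma)]$, namely that the multi-filtration by the $v_\mu$ has finite-dimensional graded pieces spanned by the expected trace monomials, rather than any formal manipulation, and the passage from the scalars $\epsilon(\gamma)$ to a single $\Z/2$-cohomology class requires checking the consistency relations over enough curves simultaneously. Establishing the intermediate claims about $\Phi_*$, preservation of rational points and of the intersection pairing, from the valuation formula is the other delicate point, since a priori $\Phi_*$ is only a bijection of $\Val$.
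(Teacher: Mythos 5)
Your overall architecture (act on lamination valuations, pass to the curve complex, invoke Ivanov, then analyse the kernel via trace relations) is the right one and matches the paper, but the two steps you defer are exactly where the real work lies, and as stated they are gaps rather than routine verifications. First, the claim that ``the set of valuations coming from measured laminations is intrinsic to the algebra, so $\Phi$ permutes it'' does not follow from the characterization announced in the abstract. That characterization says lamination valuations are the \emph{simple} ones, i.e.\ those satisfying $v(\sum m_\Gamma t_\Gamma)=\max\{v(t_\Gamma)\}$ in the multicurve basis --- but the multicurve basis is itself not intrinsic, so simplicity is not manifestly preserved by an automorphism. The paper closes this by a genuinely different mechanism: it proves a domination theorem (every valuation $v$ is dominated by some $v_\lambda$ agreeing with it on all $t_\alpha$, via the tautological representation, Morgan--Otal--Skora and Skora's theorem), introduces the order-theoretically intrinsic class of \emph{untameable} valuations, shows that strict uniquely ergodic laminations are untameable, and uses Masur's theorem plus closedness of the set of simple valuations to conclude that $\Aut(X(\Sigma))$ preserves the closure of the untameable locus, which is $\ML$. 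None of this is formal, and your proposal contains no substitute for it.

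Second, preservation of disjointness (you even claim preservation of the full intersection pairing) does not follow from knowing that $\Phi_*$ permutes curve valuations: $v_{\Phi_*\gamma}(t_\delta)=v_\gamma(\Phi^{-1}(t_\delta))$, and $\Phi^{-1}(t_\delta)$ is not a priori a trace function of a curve, so you cannot read off $i(\Phi_*\gamma,\Phi_*\delta)$ from $i(\gamma,\delta)$. The paper detects $i(\gamma,\delta)=0$ ring-theoretically: it shows $\dim(\mathcal{O}_\gamma^+\cap\mathcal{O}_\delta^+)\le \dim\C[X(\Sigma)]-2$ with equality iff $\gamma$ and $\delta$ are disjoint, using the Goldman twist flows and an Euler characteristic count on $\Sigma\setminus(\gamma\cup\delta)$; since $\mathcal{O}_\gamma^+=\C[X(\Sigma)]\cap\mathcal{O}_{v_\gamma}$ is defined from the valuation alone, this is $\Aut$-invariant. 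You flag both points as ``delicate'' but supply no argument, so the proposal is incomplete precisely at the theorem's technical heart. (Two smaller issues: your sign convention $v_\mu(t_\gamma)=-i(\mu,\gamma)$ is inconsistent with valuations that are nonnegative on the algebra, and the valuation vector of $t_\gamma$ only determines it up to an affine map $t_\gamma\mapsto a t_\gamma+b$, not up to a scalar --- the constant term must be killed by the trace relation, as in the paper.)
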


Similar results were obtained in \cite{El-Huti} for the 1-punctured torus and 4-punctured sphere. We thank Serge Cantat for pointing this out and explaining the proof.

Our strategy is to consider the action of $\Aut(X(\Sigma))$ on a space $\mathcal{V}$ of valuations. Those are by definition the functions $v \colon \C[X(\Sigma)]\to \{-\infty\}\cup [0,+\infty)$ which are null on $\C^*$, take finite values except for $v(0)=-\infty$, and satisfy for all $f,g$ the relations $v(fg)=v(f)+v(g)$ and $v(f+g)\leq \max(v(f),v(g))$.
We endow $\mathcal{V}$ with the topology given by pointwize convergence, for which the action of $\Aut(X(\Sigma))$ is continuous.

Although it is established (for instance in \cite{Otal_compact-repres_2012}) that the space of measured laminations $\ML(\Sigma)$ embeds continuously in $\mathcal{V}$, it is not clear why this subset should be preserved by $\Aut(X(\Sigma))$ and this is one of the main steps in the proof.
This will follow from a clean and presumably new description of measured laminations as the set of ``simple valuations'' (Corollary \ref{simple=ML}) which is analogous to the set of monomial valuations for polynomial algebras. The definition is inspired from D. Thurston's formula concerning intersection of simple curves (see \cite{Dylan_Thurston_intersection_2009}).

While the computation of the automorphism group of the character variety had been asked on many occasions by Juan Souto to the first author ; this work stemmed from different motivations, including a better understanding of measured laminations in terms of valuations. We hope that this description will help understand the topology of the set of all valuations over the character variety, by showing for instance that it retracts by deformation on the space of simple valuations. 

Section 1 is quite elementary : we define simple valuations and relate them to measured laminations.
Section 2 is the technical heart of the note: using Morgan-Otal-Skora theorem, we show that any valuation is (sharply) dominated by a valuation associated to a measured lamination. Then we use a theorem of Masur stating that non uniquely ergodic measured lamination have measure $0$ to prove that $\Aut(X(\Sigma))$ acts on $\ML(\Sigma)$.
In Section 3, we explain how the geometric intersection of simple curves may be read on the valuation rings of their associated valuations, and deduce that $\Aut(X(\Sigma))$ acts on the curve complex. We conclude using Ivanov's Theorem.

\subsubsection*{Acknowledgements}
We wish to thank Javier Aramayona, Serge Cantat, Christopher Leininger, Marco Maculan, Mirko Mauri, Athanase Papadopoulos, Juan Souto and Alex Wright for their implication in this work.

\section{Simple valuations and measured laminations}

For $\alpha \in \pi_1(\Sigma)$, define $t_\alpha \in \C[X(\Sigma)]$ by the formula $t_\alpha([\rho])=\Tr \rho(\alpha)$. 
Let us recall a folklore presentation of the character variety, mainly due to Procesi \cite{ConciniProcesi_invariant-theory_2017}:

\begin{thm}
The algebra $\C[X(\Sigma)]$ is generated by the $t_\alpha$ for $\alpha\in \pi_1(\Sigma)$ with ideal of relations generated by $t_1-2$ and $t_\alpha t_\beta -t_{\alpha\beta}-t_{\alpha\beta^{-1}}$ for $\alpha,\beta\in \pi_1(\Sigma)$.
\end{thm}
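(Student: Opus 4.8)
The plan is to compare $\C[X(\Sigma)]$ with the abstract algebra $R=\C[\mathbf t_\alpha : \alpha\in\pi_1(\Sigma)]/I$, where the $\mathbf t_\alpha$ are formal symbols and $I$ is the ideal generated by $\mathbf t_1-2$ and the elements $\mathbf t_\alpha\mathbf t_\beta-\mathbf t_{\alpha\beta}-\mathbf t_{\alpha\beta^{-1}}$. The assignment $\mathbf t_\alpha\mapsto t_\alpha$ defines a $\C$-algebra homomorphism $\phi\colon R\to\C[X(\Sigma)]$, and the theorem is exactly the assertion that $\phi$ is an isomorphism. Well-definedness of $\phi$ is the easy half: both families of relations hold as identities on $\Hom(\pi_1(\Sigma),\SL_2(\C))$. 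Indeed $\Tr\rho(1)=2$, while Cayley--Hamilton applied to $B=\rho(\beta)\in\SL_2(\C)$ yields $B+B^{-1}=(\Tr B)\id$; multiplying by $A=\rho(\alpha)$ and taking traces gives $\Tr(AB)+\Tr(AB^{-1})=(\Tr A)(\Tr B)$, which is the second relation.

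First I would prove that $\phi$ is surjective, i.e. that the trace functions generate $\C[X(\Sigma)]$. Fixing generators $\alpha_1,\dots,\alpha_{2g}$ of $\pi_1(\Sigma)$ realizes $\Hom(\pi_1(\Sigma),\SL_2(\C))$ as a closed subvariety of $\SL_2(\C)^{2g}$ stable under simultaneous conjugation, so $\C[X(\Sigma)]$ is a quotient of the invariant ring $\C[\SL_2(\C)^{2g}]^{\SL_2(\C)}$. The first fundamental theorem of invariant theory for $\SL_2$ (see \cite{ConciniProcesi_invariant-theory_2017}) asserts that this invariant ring is generated by traces of words in the $\alpha_i$, and these are precisely the images $t_\gamma=\phi(\mathbf t_\gamma)$; surjectivity follows.

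The core of the argument is injectivity, and here I would first use the second relation as a rewriting rule. Since $\mathbf t_\alpha\mathbf t_\beta=\mathbf t_{\alpha\beta}+\mathbf t_{\alpha\beta^{-1}}$ in $R$, a straightforward induction rewrites every monomial $\mathbf t_{\gamma_1}\cdots\mathbf t_{\gamma_k}$ as an integer linear combination of single symbols $\mathbf t_\gamma$. Thus $R$ is spanned as a $\C$-vector space by $\{\mathbf t_\gamma : \gamma\in\pi_1(\Sigma)\}$, and one reads off further identities such as $\mathbf t_\gamma=\mathbf t_{\gamma^{-1}}$ by specializing $\alpha=1$. Injectivity of $\phi$ is thereby reduced to showing that every $\C$-linear relation $\sum_i c_i\,t_{\gamma_i}=0$ holding among the trace functions on $X(\Sigma)$ is already a consequence of the listed relations.

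This last step is the main obstacle: it is the content of the second fundamental theorem for $\SL_2$, namely that the module of relations among traces is generated by the fundamental trace identity. I would deduce it from Procesi's structure theory of the trace ring of generic $2\times2$ matrices, treating first the free group $F_{2g}$ — where $\Hom(F_{2g},\SL_2(\C))=\SL_2(\C)^{2g}$ and the classical theory applies directly — and then descending along the surjection $F_{2g}\twoheadrightarrow\pi_1(\Sigma)$, verifying that the surface relation $\prod_{i=1}^{g}[\alpha_i,\beta_i]=1$ forces nothing beyond what is already encoded by the symbols $\mathbf t_\gamma$. A genuine subtlety to confront is that the quotient $R$ could a priori carry nilpotents, whereas $\C[X(\Sigma)]$ is reduced; the theorem therefore also claims that $I$ is radical. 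To settle this I would appeal to the analysis of Brumfiel--Hilden and Przytycki--Sikora, who identify $R$ with the Kauffman bracket skein algebra specialized at $A=-1$ and resolve precisely this point.
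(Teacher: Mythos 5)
The paper does not actually prove this statement: it is quoted as a folklore presentation and attributed to Procesi via the citation to De Concini--Procesi, so there is no in-paper argument to compare against. Your outline is the standard route and is sound where it is explicit: well-definedness via Cayley--Hamilton ($B+B^{-1}=(\Tr B)\id$ for $B\in\SL_2$), surjectivity via the first fundamental theorem for invariants of tuples of $2\times 2$ matrices (together with the surjectivity of restriction of invariants to a closed stable subvariety, which uses reductivity of $\SL_2$ and is worth a word), and the reduction of injectivity to linear relations among the single symbols $\mathbf t_\gamma$. The one place where your sketch understates the difficulty is the descent from $F_{2g}$ to $\pi_1(\Sigma)$: the claim that the surface relator ``forces nothing beyond what is already encoded by the symbols $\mathbf t_\gamma$'' is not a verification one performs in passing --- it is equivalent to saying that the ideal of $X(\Sigma)$ inside $\C[X(F_{2g})]$ is generated by the differences $t_w-t_{w'}$ for words equal in $\pi_1(\Sigma)$, which together with reducedness is precisely the theorem of Przytycki--Sikora (and Charles--March\'e) that you invoke at the end. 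In other words, your last two steps are really one and the same appeal to the literature, and that appeal carries essentially the whole weight of the theorem --- which is also, implicitly, what the paper does by citing it as known. As a blind reconstruction this is acceptable and correctly sourced, but you should not present the descent step as a routine check separate from the reducedness issue.
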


We call \emph{multicurve} on $\Sigma$ an embedded one dimensional submanifold $\Gamma \subset \Sigma$ which is a union of curves homotopic to $\gamma_i \in\pi_1(\Sigma)\setminus \{1\}$, and set $t_\Gamma=\prod_{i=1}^n t_{\gamma_i}\in \C[X(\Sigma)]$. Components of $\Gamma$ must be simple and disjoint ; but we allow empty multicurves for which $t_\emptyset=1$.
The previous theorem has the following important consequence \cite{PrSi_sl2-skein_2000}:

\begin{thm}[Linear basis for the character algebra]
The family $(t_\Gamma)$ where $\Gamma$ ranges over the isotopy classes of multicurves forms a linear basis of $\C[X(\Sigma)]$.
\end{thm}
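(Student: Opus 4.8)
The plan is to prove the two defining properties separately: that the family $(t_\Gamma)$ spans $\C[X(\Sigma)]$, and that it is linearly independent. Spanning is the elementary direction and rests entirely on the previous theorem (Procesi's presentation), whereas linear independence is the genuine difficulty.

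For spanning, by Procesi's theorem it suffices to rewrite every monomial $t_{\alpha_1}\cdots t_{\alpha_k}$, with $\alpha_j\in\pi_1(\Sigma)$ arbitrary, as a $\C$-linear combination of the $t_\Gamma$. First I would reduce a single $t_\alpha$: realize $\alpha$ by an immersed loop in general position and induct on its number of self-intersections. Writing $\alpha=\beta\gamma$ for the two lobes $\beta,\gamma$ based at a chosen crossing, the relation $t_\beta t_\gamma=t_{\beta\gamma}+t_{\beta\gamma^{-1}}=t_\alpha+t_{\beta\gamma^{-1}}$ expresses $t_\alpha=t_\beta t_\gamma-t_{\beta\gamma^{-1}}$ in terms of loops with strictly fewer self-crossings, using also $t_\delta=t_{\delta^{-1}}$ (a consequence of Cayley--Hamilton, and derivable from the relations since $t_1=2$). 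Applying the same resolution to crossings between distinct components reduces an arbitrary product to traces of pairwise disjoint simple loops, that is, to the $t_\Gamma$. Hence the $t_\Gamma$ generate $\C[X(\Sigma)]$ linearly.

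The crux is linear independence: because the trace relations are so plentiful, one must certify that no hidden relation survives among the geometric family. The plan is a leading-term argument. Fix a pants decomposition and encode each isotopy class of multicurve by its Dehn--Thurston coordinates, i.e. the intersection and twisting numbers with the pants curves. Evaluate the $t_\Gamma$ on a well chosen family of characters, for instance Fuchsian characters degenerating by pinching the chosen pants curves along a generic independent twisting direction. Along such a family each $t_\gamma$ grows like an exponential whose rate is governed by the total intersection number $i(\gamma,\cdot)$ with the pinched curves and whose oscillation in the twist parameters records the twisting coordinates. Consequently the leading asymptotics of $t_\Gamma=\prod_j t_{\gamma_j}$ determine the Dehn--Thurston coordinates of $\Gamma$, hence $\Gamma$ itself. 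A putative nontrivial relation $\sum_\Gamma c_\Gamma t_\Gamma=0$ would then, upon isolating the terms of maximal growth rate and extracting the dominant oscillatory frequency (a ``distinct quasi-exponentials are independent'' argument), force the corresponding $c_\Gamma$ to vanish; iterating eliminates every coefficient.

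The main obstacle is precisely this independence step: controlling the asymptotics of traces uniformly over all multicurves and checking that the leading data separates isotopy classes without collision. An alternative and more algebraic route I would keep in reserve is to build a filtration of $\C[X(\Sigma)]$ by intersection-number degree for which multiplication of multicurves is unitriangular in the family $(t_\Gamma)$; freeness of the associated graded would then yield independence directly. In either approach, once both spanning and independence are established the theorem follows.
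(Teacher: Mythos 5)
The paper offers no proof of this statement: it is quoted from Przytycki--Sikora \cite{PrSi_sl2-skein_2000}, so there is no internal argument to compare yours against. Your spanning half is correct and is the standard skein-resolution argument: from the relation with $\alpha=1$ one gets $t_1t_\beta=t_\beta+t_{\beta^{-1}}$, hence $t_{\beta^{-1}}=t_\beta$; resolving a double point of an immersed diagram via $t_{\beta\gamma}=t_\beta t_\gamma-t_{\beta\gamma^{-1}}$ strictly decreases the total number of crossings, so induction terminates on crossingless diagrams, i.e.\ multicurves (trivial components being absorbed by $t_1=2$); combined with Procesi's generation statement this yields linear spanning.

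The independence half, however, is where the whole content of the theorem lies, and as written it is a plan rather than a proof: the decisive step is asserted, not established. Concretely, along a pinching family the leading growth of $t_\gamma$ only records the intersection numbers of $\gamma$ with the pants curves, and two distinct multicurves can share all of these, differing only in their twisting coordinates; moreover the dependence of $t_\Gamma$ on the twist parameters is not a single quasi-exponential but a finite sum of exponentials (one per resolution of the crossings of $\Gamma$ with the pants curves), so ``extracting the dominant oscillatory frequency'' requires showing that the top-frequency terms coming from distinct multicurves cannot cancel --- a statement of exactly the same nature as the one you are trying to prove. This is precisely the hard analytic content of Charles--March\'e \cite{CharleMar_independant-trace-su2_2012}, who execute your strategy on the $\mathrm{SU}(2)$ locus (which is Zariski dense, so independence there suffices) using Fourier decomposition and stationary phase. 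Your fallback route via a filtration by intersection number is essentially what the paper later invokes as \cite[Theorem 12]{PrSi_skein_2019}, but beware that asserting unitriangularity of multiplication ``in the family $(t_\Gamma)$'' already presupposes that this family is free, so as phrased it is circular. In short: the architecture is the right one and matches the literature, but the load-bearing step of the independence argument is left open, and it is exactly the theorem's difficulty.
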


This allows to treat the elements $t_\Gamma$ as if they were monomials (despite the fact that they are not stable by multiplication), and motivates the next definition.

\begin{Define}[Simple valuations]
A valuation $v\in \mathcal{V}$ is called simple if for any $f\in \C[X(\Sigma)]\setminus\{0\}$ decomposed as $f=\sum m_\Gamma t_\Gamma$ in the basis of multicurves one has
\begin{equation}\label{simple}\tag{$\max$}
v(f)=\max\{v(t_\Gamma) \mid m_\Gamma \neq 0\}
\end{equation}
\end{Define}

\begin{Prop}[Measured laminations are simple valuations]
\label{laminationsimple}
Fix $\lambda \in \ML$ and denote by $i(\cdot,\cdot)$ the intersection pairing between measured laminations. The map $v_\lambda:t_\alpha\mapsto i(\lambda,\alpha)$ for $\alpha\in \pi_1(\Sigma)$ extends to a unique simple valuation $v_\lambda\in \mathcal{V}$.
\end{Prop}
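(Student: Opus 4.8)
The plan is to establish existence and uniqueness separately, using the multicurve basis as the backbone. Uniqueness is immediate: if a simple valuation $v_\lambda$ exists and agrees with $i(\lambda,\cdot)$ on the generators $t_\alpha$, then its value on any $t_\Gamma=\prod_i t_{\gamma_i}$ is forced to be $\sum_i i(\lambda,\gamma_i)=i(\lambda,\Gamma)$ by multiplicativity, and its value on an arbitrary $f=\sum_\Gamma m_\Gamma t_\Gamma$ is then forced by the simplicity relation \eqref{simple}. So the content is entirely in existence.

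\medskip

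For existence, I would \emph{define} $v_\lambda$ directly by the formula it is forced to have and then verify the valuation axioms. That is, set $v_\lambda(t_\Gamma)=i(\lambda,\Gamma):=\sum_i i(\lambda,\gamma_i)$ on basis elements, and for general $f=\sum_\Gamma m_\Gamma t_\Gamma$ put
\[
v_\lambda(f)=\max\{\, i(\lambda,\Gamma)\mid m_\Gamma\neq 0\,\},
\]
with $v_\lambda(0)=-\infty$. The additive axiom $v_\lambda(fg)=v_\lambda(f)+v_\lambda(g)$ and the ultrametric axiom $v_\lambda(f+g)\le\max(v_\lambda(f),v_\lambda(g))$ must be checked. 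The ultrametric inequality is essentially automatic from the max-formula, since the support of $f+g$ is contained in the union of the supports of $f$ and $g$. The normalization $v_\lambda\equiv 0$ on $\C^*$ holds because the constant $1=t_\emptyset$ has $i(\lambda,\emptyset)=0$.

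\medskip

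The hard part will be multiplicativity, $v_\lambda(fg)=v_\lambda(f)+v_\lambda(g)$. The difficulty is that the $t_\Gamma$ are not closed under multiplication: a product $t_\Gamma t_{\Gamma'}$ expands, via repeated use of the skein relation $t_\alpha t_\beta=t_{\alpha\beta}+t_{\alpha\beta^{-1}}$, into a sum of basis elements $t_{\Delta}$ where the $\Delta$ arise by resolving the intersections of $\Gamma$ and $\Gamma'$. The key geometric input, and the reason the definition is modeled on D. Thurston's intersection formula, is that each resolution $\Delta$ satisfies $i(\lambda,\Delta)\le i(\lambda,\Gamma)+i(\lambda,\Gamma')$, with equality attained by the resolution that does not create new intersections with $\lambda$ (the ``smoothing compatible with $\lambda$''). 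Concretely I would argue that intersection number with a fixed measured lamination is subadditive under the skein resolutions, using that $\lambda$ assigns a transverse measure and that smoothing a crossing of $\Gamma\cup\Gamma'$ can only decrease or preserve the total $\lambda$-mass of transverse intersections. This gives $v_\lambda(fg)\le v_\lambda(f)+v_\lambda(g)$, and the matching lower bound follows from exhibiting a term achieving equality together with the fact that the leading terms cannot cancel—this non-cancellation is where one uses that $\lambda$ selects a unique maximal resolution, so that the coefficient of the top basis element in $fg$ is nonzero. Verifying this leading-term survival carefully is the main obstacle; once it is in hand, combining the upper and lower bounds yields multiplicativity and hence that $v_\lambda$ is a genuine simple valuation.
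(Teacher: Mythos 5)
Your skeleton matches the paper's in outline --- define $v_\lambda$ on the multicurve basis, extend by the max-formula, and reduce everything to multiplicativity, i.e.\ to the survival of the leading terms in a product. But the proposal stops exactly where the proof has to start: you name the leading-term survival as ``the main obstacle'' and do not resolve it, and the heuristic you offer --- that ``$\lambda$ selects a unique maximal resolution'' --- is not correct in general. A crossing of $\Gamma\cup\Gamma'$ that $\lambda$ does not pass through can be smoothed either way without changing the intersection number with $\lambda$, so several basis elements can achieve the maximum and one must rule out cancellation among them, not merely exhibit one maximizer. The paper handles this for a \emph{simple curve} $\delta$ by an algebraic argument: the filtration $F_n=\operatorname{Span}\{t_\gamma \mid \gamma\in\pi_1(\Sigma),\ i(\delta,\gamma)\le n\}$ has an associated graded algebra $\bigoplus_n F_n/F_{n-1}$ which is an integral domain (a cited theorem from skein theory), and multiplicativity of $v_\delta$ is exactly the statement that the product of nonzero classes in $F_k/F_{k-1}$ and $F_l/F_{l-1}$ is nonzero in $F_{k+l}/F_{k+l-1}$. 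No such statement is invoked (or readily available) for an arbitrary measured lamination; the paper instead obtains the general case by writing $\lambda$ as a limit of weighted simple curves $m_j\delta_j$ with $i(\lambda,\alpha)=\lim_j m_j i(\delta_j,\alpha)$ and observing that the valuation axioms and property \eqref{simple} pass to pointwise limits. Your plan to treat a general $\lambda$ head-on therefore needs either this limiting step or a genuinely new non-cancellation argument.

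There is a second, smaller gap: after defining $v_\lambda$ by the max-formula on the basis, you never verify that the resulting function actually \emph{extends} the prescribed map, i.e.\ that $v_\lambda(t_\alpha)=i(\lambda,\alpha)$ when $\alpha\in\pi_1(\Sigma)$ is \emph{not} simple, so that $t_\alpha$ is a genuine linear combination $\sum m_\Gamma t_\Gamma$. The required identity $i(\lambda,\alpha)=\max\{i(\lambda,\Gamma)\mid m_\Gamma\neq 0\}$ is precisely D.~Thurston's formula; you cite his work as motivation for the definition, but this verification must appear explicitly, since without it the constructed valuation is simple yet not known to restrict to $t_\alpha\mapsto i(\lambda,\alpha)$ on all of $\pi_1(\Sigma)$.
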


\begin{proof}
We first treat the case of a measured lamination  associated to a simple curve $\delta$. We define $v_{\delta}$ on the basis of multicurves by the expression $v_{\delta}\left(\prod_{i=1}^n t_{\gamma_i}\right)= \sum i(\delta,\gamma_i)$ and extend it to $\C[X(\Sigma)]$ using equation \eqref{simple}.

Consider now any element $\alpha\in \pi_1(\Sigma)$: it can be decomposed as $t_\alpha=\sum m_\Gamma t_\Gamma$ in $\C[X(\Sigma)]$ and we must first prove that both definitions coincide ; meaning that $i(\delta, \alpha)=\max \{i(\delta,\Gamma) \mid m_\Gamma \neq 0\}$, but this is the content of D. Thurston's formula \cite[Lemma 12]{Dylan_Thurston_intersection_2009}.

All that remains is to check the formula $v_\delta(fg)=v_\delta(f)+v_\delta(g)$. For this, consider the increasing filtration of $\C[X(\Sigma)]$ defined by $F_n= \operatorname{Span} \{t_\gamma \mid \gamma\in \pi_1(\Sigma), i(\delta, \gamma)\leq n\}$. Let $k=v_\delta(f)$ and $l=v_\delta(g)$ be such that $f\in F_k\setminus F_{k-1}$ and $g\in F_l\setminus F_{l-1}$. It is equivalent to prove $v_\delta(fg)=v_\delta(f)+v_\delta(g)$ and that $fg$ is non zero in $F_{k+l}/F_{k+l-1}$. Hence, we are reduced to showing that the graded algebra $\bigoplus_{n\in \N}F_n/F_{n-1}$ is an integral domain, which is \cite[Theorem 12]{PrSi_skein_2019} (this can also be derived from the proof of \cite[Theorem 5.3]{CharleMar_independant-trace-su2_2012}).


For the general case, we extend the map $v_\lambda$ to $\C[X(\Sigma)]$ in the same way. The fact that it is indeed an extension and defines a simple valuation will follow from the case of simple curves by a limiting procedure.
We know from \cite[Theorème 1.3]{F-L-P_travaux-thurston_1979} that any measured lamination $\lambda$ is a limit of weighted simple curves $m_j \delta_j$ in the sense that for all $\alpha\in \pi_1(\Sigma)$ one has $i(\lambda, \alpha)=\lim_{j\to\infty} m_j i(\delta_j,\alpha)$.
Hence by equation \eqref{simple} the $m_jv_{\delta_j}$ converge pointwizely to $v_\lambda$, which indeed defines a valuation satisfying $v_\lambda(t_\alpha)=i(\lambda,\alpha)$ for all $\alpha\in\pi_1(\Sigma)$.
One could also observe that the subset of simple valuations is closed in $\mathcal{V}$ for the topology of pointwize convergence. 
\end{proof}

\begin{Define}[Strict valuations]
A valuation $v\in \Val$ is \emph{strict} if for all multicurves $\Gamma \neq \Delta$, we have $v(t_\Gamma)\neq v(t_\Delta)$.
\end{Define}

\begin{Define}[Positive valuations]
A valuation $v\in\mathcal{V}$ \emph{positive} when $v(t_\gamma)>0$ for every simple curve $\gamma$.
\end{Define}

\begin{rem} Comments on the relations between simple, strict and positive valuations:

\begin{itemize}

\item[(i)] For simple valuations it is equivalent to be positive on $t_\gamma$ for all simple $\gamma$, on $t_\alpha$ for all non-trivial $\alpha\in\pi_1(\Sigma)$ or on non-constant elements in $\C[X(\Sigma)]$.

\item[(ii)](Strict implies positive).
By definition $v(t_\emptyset)=v(1)=0$, it follows that if $v$ is strict then for any simple curve $\gamma$ we must have $v(t_\gamma)>0$. 

\item[(iii)](Strict implies simple).
For any valuation $v\in \Val$, the relation $v(f+g)\leq \max\{v(f),v(g)\}$ is an equality as soon as $v(f)\ne v(g)$. Therefore strict valuations are simple since they automatically satisfy the (\ref{simple}) relation.
\end{itemize}
\end{rem}

Recall that Thurston introduced a natural measure on the space of measured laminations.
Masur showed that up to scaling, there is only one $\Mod(\Sigma)$-invariant measure on $\ML$ in its Lebesgue class. Therefore it is proportional to the Borelian measure assigning to every open set $U$ the value
\[\lim_{R\to \infty} \frac{\operatorname{card} \{\textrm{multicurve }\Gamma\in R.U \}}{R^{6g-6}}.\]
This may serve as a definition for our purposes as we only speak about negligibility.

\begin{Prop}[Most measured laminations are strict]
\label{most_ml=strict}
The set of measured laminations $\lambda$ such that $v_\lambda$ is strict has full measure in $\ML$.
\end{Prop}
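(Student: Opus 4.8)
The plan is to realize the non-strict locus as a countable union of ``walls'' and to show that each wall is negligible. By definition $v_\lambda$ is \emph{strict} exactly when $v_\lambda(t_\Gamma)\neq v_\lambda(t_\Delta)$ for all pairs of distinct multicurves, and by Proposition~\ref{laminationsimple} we have $v_\lambda(t_\Gamma)=\sum_i i(\lambda,\gamma_i)=:i(\lambda,\Gamma)$ for $\Gamma=\bigcup_i\gamma_i$. Hence the set of laminations for which $v_\lambda$ fails to be strict is
\[
\bigcup_{\Gamma\neq\Delta} W_{\Gamma,\Delta},\qquad W_{\Gamma,\Delta}:=\{\lambda\in\ML\mid i(\lambda,\Gamma)=i(\lambda,\Delta)\},
\]
the union being taken over the (countably many) unordered pairs of distinct isotopy classes of multicurves. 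Since a countable union of negligible sets is negligible, it suffices to prove that $W_{\Gamma,\Delta}$ is negligible whenever $\Gamma\neq\Delta$.

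Fix such a pair and set $\psi:=i(\cdot,\Gamma)-i(\cdot,\Delta)$, so that $W_{\Gamma,\Delta}=\psi^{-1}(0)$. In Dehn--Thurston or train track coordinates, $\ML$ is a piecewise linear manifold of dimension $6g-6$ whose Lebesgue class is the one carrying Thurston's measure, and for each fixed multicurve the intersection function $i(\cdot,\Gamma)$ is continuous, positively homogeneous of degree one, and piecewise linear for a locally finite decomposition of $\ML$ into top-dimensional polyhedral cones. Passing to the common refinement adapted to both $\Gamma$ and $\Delta$, the function $\psi$ is linear on each top cone $C$, say $\psi|_C=L_C$. On a cone where $L_C\not\equiv 0$, the locus $\{L_C=0\}\cap C$ is contained in a proper hyperplane and is therefore negligible. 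Consequently $W_{\Gamma,\Delta}$ is negligible as soon as $\psi$ vanishes identically on \emph{no} top cone, i.e. as soon as $\psi^{-1}(0)$ has empty interior.

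The crux, and the step I expect to be the main obstacle, is thus to rule out that $i(\cdot,\Gamma)$ and $i(\cdot,\Delta)$ coincide on a nonempty open subset of $\ML$ when $\Gamma\neq\Delta$. That the two functions differ \emph{somewhere} is Thurston's theorem that a measured lamination is determined by its intersection numbers with simple curves: choosing a simple curve $\gamma$ with $i(\gamma,\Gamma)\neq i(\gamma,\Delta)$ shows $\psi\not\equiv 0$. To upgrade this to emptiness of the interior I would read the linear pieces $L_C$ off the coordinates directly. A top cone of the refinement may be taken to be the cone $C_\tau$ of laminations carried by a maximal train track $\tau$; for $\lambda\in C_\tau$ with branch weights $(w_b)$ and a multicurve $c$ in efficient position with respect to $\tau$ one has the linear expression $i(\lambda,c)=\sum_b w_b\,|c\cap b|$, so that $\psi|_{C_\tau}\equiv 0$ forces $|\Gamma\cap b|=|\Delta\cap b|$ for every branch $b$ of $\tau$. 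Since $\tau$ is filling, these transverse crossing numbers determine the multicurve, whence $\Gamma=\Delta$, the desired contradiction. Establishing this last implication cleanly (controlling the efficient position of $\Gamma,\Delta$ against the maximal track governing the hypothetical open coincidence set) is the delicate technical point; once it is in place the previous paragraph shows each $W_{\Gamma,\Delta}$ is negligible and the proposition follows.
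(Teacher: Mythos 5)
Your reduction to showing that each wall $W_{\Gamma,\Delta}=\{i(\cdot,\Gamma)=i(\cdot,\Delta)\}$ is negligible, and the observation that on a PL structure it suffices to rule out that $\psi=i(\cdot,\Gamma)-i(\cdot,\Delta)$ vanishes identically on a top-dimensional cone, are both sound and match the paper's high-level strategy. But the crux step is where your argument breaks, and in two places. First, from $\sum_b w_b\bigl(|\Gamma\cap b|-|\Delta\cap b|\bigr)=0$ for all $\lambda\in C_\tau$ you conclude $|\Gamma\cap b|=|\Delta\cap b|$ for every branch $b$; this is false as stated, because the weight vectors $(w_b)$ do not range over an open set of $\R^{B}$ but only over the cone of solutions to the switch conditions, whose linear span is a proper subspace (of dimension $6g-6$ inside $\R^{18g-18}$ for a maximal track). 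All you may conclude is that the difference of crossing vectors lies in the annihilator of that subspace, i.e.\ in the span of the switch functionals, which does not force it to vanish. Second, even granting equal crossing numbers on every branch, the claim that a filling track's transverse crossing numbers determine the isotopy class of a multicurve in efficient position is not a standard fact and is left unproved; you flag this yourself, but without it (and without repairing the first point) the proof does not close.

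For comparison, the paper proves exactly the statement you need --- that $W_{\Gamma,\Delta}$ has empty interior, indeed positive codimension --- by a dynamical rather than coordinate argument: it first reduces to the full-measure set of \emph{positive} laminations (those meeting every simple curve, using that each $N(\gamma)$ is the complement of a codimension-$1$ PL set), picks a simple curve $\gamma$ with $i(\gamma,\Gamma)\neq i(\gamma,\Delta)$, and applies Papadopoulos's twist flow $H^\gamma_t$ together with the first-order formula $i(H^\gamma_t\lambda,\Gamma_j)=i(\lambda,\Gamma_j)+t\,i(\gamma,\Gamma_j)+o(t)$ to see that the flow is transverse to the wall. Note that the positivity reduction is not cosmetic: the flow fixes any $\lambda$ disjoint from $\gamma$, so transversality fails there. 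If you want to keep your PL route, you would need either to carry out the train-track bookkeeping honestly (dealing with switch conditions and with efficient position of both multicurves relative to a single track carrying the hypothetical open coincidence set), or to substitute the twist-flow derivative for your branch-counting argument at the final step.
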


\begin{proof}
Let us first show that the set of $\lambda\in \ML$ such that $v_\lambda$ is positive has full measure.
By \cite[Section 3, Lemma 2]{Papa_remarques-ml_1986}, for every simple curve $\gamma$ the set $N(\gamma)=\{\lambda \in \ML \mid i(\gamma,\lambda)\neq 0\}$ is the complement of a codimension-$1$ PL-submanifold, in particular it has measure $0$ (see also \cite[Section 4]{Papa_intersection-hamiltonian_1986}). The set of positive laminations is the intersection of all $N(\gamma)$, so it has full measure.

The complement to the set of strict valuations is the union over the countable set of pairs of distinct non-empty multicurves, of the $N(\Gamma_1,\Gamma_2)=\{\lambda \in \ML \mid i(\lambda,\Gamma_1)= i(\lambda,\Gamma_2)\}$, so the result follows from the next lemma.
\end{proof}

\begin{Lem}[Intersection with distinct multicurves seldom coincides]
For distinct multicurves $\Gamma_1 \neq \Gamma_2$, the set $N(\Gamma_1,\Gamma_2)=\{\lambda \in \ML \mid i(\lambda,\Gamma_1)= i(\lambda,\Gamma_2)\}$ has measure $0$ in the set of positive measured laminations.
\end{Lem}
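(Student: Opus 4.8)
The plan is to realize $N(\Gamma_1,\Gamma_2)$ as the zero locus of a piecewise-linear function and to show this locus is lower-dimensional. Fix global piecewise-linear coordinates on $\ML\cong\R^{6g-6}$ (for instance Dehn--Thurston coordinates attached to a pants decomposition), in which the Thurston measure is the Lebesgue class. For every multicurve $\Gamma$ the function $\lambda\mapsto i(\lambda,\Gamma)$ is continuous, positively homogeneous of degree one, and piecewise linear; hence so is $\phi:=i(\cdot,\Gamma_1)-i(\cdot,\Gamma_2)$, and $N(\Gamma_1,\Gamma_2)=\phi^{-1}(0)$. Since being negligible in $\ML$ implies being negligible in the open subset of positive laminations, it suffices to prove that $\phi^{-1}(0)$ has Lebesgue measure $0$.

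By homogeneity, $\ML$ is a finite union of closed polyhedral cones on each of which both $i(\cdot,\Gamma_1)$ and $i(\cdot,\Gamma_2)$ are linear. On such a maximal cone $C$, either $\phi$ vanishes identically, or $\{\phi=0\}\cap C$ is the trace of a proper linear subspace and is therefore negligible. Consequently $\phi^{-1}(0)$ has positive measure if and only if $\phi$ vanishes on some top-dimensional cone, equivalently on some nonempty open set $U\subseteq\ML$. The whole statement thus reduces to the following separation property: if $\Gamma_1\neq\Gamma_2$, then $i(\cdot,\Gamma_1)$ and $i(\cdot,\Gamma_2)$ cannot agree on any open subset of $\ML$.

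To prove this separation I would argue via train tracks. An open set $U$ on which $\phi$ vanishes lies in the cone $C_\tau$ of measured laminations carried by some complete (filling) train track $\tau$; on $C_\tau$ both $i(\cdot,\Gamma_1)$ and $i(\cdot,\Gamma_2)$ are linear in the branch weights, so agreement on the nonempty open set $U$ forces agreement on all of $C_\tau$, that is, equality of the two branch-coefficient vectors. Placing $\Gamma_1,\Gamma_2$ in efficient position with respect to $\tau$, these coefficients are precisely the transverse intersection numbers of the multicurves with the branches of $\tau$, so their equality means $\Gamma_1$ and $\Gamma_2$ cross every branch of $\tau$ the same number of times. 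Since $\tau$ fills $\Sigma$, its complementary trigons reconstruct each multicurve from its crossing pattern, whence $\Gamma_1=\Gamma_2$, contradicting the hypothesis.

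The main obstacle is exactly this last step: upgrading ``the two intersection functionals coincide on a full maximal cone'' to $\Gamma_1=\Gamma_2$. It rests on the fact that a multicurve is recovered from its intersection numbers with the branches of a single complete train track --- the faithfulness of the dual train-track coordinates --- which I would justify using Penner--Harer train-track theory rather than the mere global injectivity of $\mu\mapsto i(\cdot,\mu)$, since a nonzero piecewise-linear function may perfectly well vanish on a top-dimensional cone. The surrounding measure-theoretic reductions are routine once the Thurston measure is identified with Lebesgue measure in piecewise-linear coordinates.
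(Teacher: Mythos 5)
Your measure-theoretic reduction is sound: since $i(\cdot,\Gamma_1)-i(\cdot,\Gamma_2)$ is piecewise linear and homogeneous, its zero set is negligible unless it vanishes identically on some top-dimensional cone, so everything hinges on the separation property you isolate. But the train-track argument you give for that property has a genuine gap, at exactly the step you yourself flag as the main obstacle. First, for a multicurve $\Gamma$ \emph{carried} by the complete track $\tau$, the function $i(\cdot,\Gamma)$ is not linear on the cone $C_\tau$ (already in the model case of the torus, intersection with a carried curve is given on a maximal cone by an expression like $|x-y|$), so efficient position and the branch-by-branch formula are not available for arbitrary $\Gamma_1,\Gamma_2$ without further subdividing the cone. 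Second, and more seriously: even where the formula $i(\lambda,\Gamma)=\sum_b w_\lambda(b)\,n_b(\Gamma)$ does hold, the weight vectors $w_\lambda$ range only over the subspace $W\subset\R^B$ cut out by the switch conditions, which has dimension $6g-6$ while the number of branches is $B=18g-18$. Agreement of the two functionals on $C_\tau$ therefore only gives $n_\bullet(\Gamma_1)-n_\bullet(\Gamma_2)\in W^\perp$, a space of dimension $12g-12$; it does not give equality of the crossing vectors, which is what your reconstruction-from-trigons step requires. So the crucial implication ``the functionals agree on a maximal cone $\Rightarrow\Gamma_1=\Gamma_2$'' is not established, and this is precisely the place where a nonzero piecewise-linear function could a priori vanish on a top-dimensional cone.

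The paper sidesteps this local separation problem entirely: it uses only the classical \emph{global} fact that $\Gamma_1\neq\Gamma_2$ are separated by $i(\gamma,\cdot)$ for some simple curve $\gamma$, and converts it into transversality by means of Papadopoulos's twist flow $H^\gamma_t$ along $\gamma$, which moves every positive $\lambda$ and along which $i(\lambda_t,\Gamma_j)=i(\lambda,\Gamma_j)+t\,i(\gamma,\Gamma_j)+o(t)$. The orbit of any positive $\lambda$ then meets $N(\Gamma_1,\Gamma_2)$ at most once near $\lambda$, giving positive codimension directly. To salvage your route you would need an actual proof of the faithfulness of branch coordinates modulo $W^\perp$ (e.g.\ via Penner--Harer's tangential measures), or you could simply import the twist-flow derivative computation to rule out vanishing of $\phi$ on an open set.
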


\begin{proof}
%
Consider $\lambda$ a positive measured lamination.
Since $\Gamma_1 \neq \Gamma_2$, there is a simple curve $\gamma$ such that $i(\gamma,\Gamma_1)\neq i(\gamma,\Gamma_2)$, also intersecting $\lambda$ by positivity.
Now apply to $\lambda$ the twist flow along $\gamma$ defined in \cite[Section 3]{Papa_remarques-ml_1986} (or \cite[Section 4]{Papa_intersection-hamiltonian_1986}) to get $\lambda_t = H^\gamma_t(\lambda)$. Then by \cite[Theorem 2]{Papa_remarques-ml_1986} (or \cite[Theorem 4]{Papa_intersection-hamiltonian_1986}), we have $i(\lambda_t, \Gamma_j) = i(\lambda, \Gamma_j) + t \cdot i(\gamma, \Gamma_j) + o(t)$ as $t$ decreases to $0$. This shows that the orbit of $\lambda$ intersects $N(\Gamma_1,\Gamma_2)$ at most once around $\lambda$.
%
%
This shows that the PL-submanifold $N(\gamma_1,\gamma_2)$ is transverse to $H^\gamma_t$, so it has positive codimension and measure $0$.
\end{proof}

\section{Domination of valuations}
\label{domination}

\begin{define}[Order structure]
We call \emph{domination} and denote $\le$ the partial order structure on $\Val$ defined by $v\leq w$ if for every $f\in\C[X(\Sigma)]$ we have $v(f)\leq w(f)$.
\end{define}

The main purpose of this subsection is to obtain the following theorem whose proof consists in a compilation of known results.

\begin{Thm}[Domination by measured laminations]
\label{domination_val_ml}
For any $v\in \mathcal{V}$ there exists a unique measured lamination $\lambda$ such that $v\leq v_\lambda$ and satisfying $v(t_\alpha)=v_\lambda(t_\alpha)$ for all $\alpha\in \pi_1(\Sigma)$.  
\end{Thm}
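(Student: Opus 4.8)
The plan is to route the statement through the theory of isometric actions on $\R$-trees, following Culler--Shalen and Morgan--Shalen. Given $v\in\mathcal{V}$, I would first promote it to a genuine field valuation: the algebra $\C[X(\Sigma)]$ is a domain (as witnessed by the integral graded pieces appearing in the proof of Proposition \ref{laminationsimple}), so $v$ extends to an $\R$-valued valuation $\hat v$ on $K=\Frac \C[X(\Sigma)]$ via $\hat v(f/g)=v(f)-v(g)$. To this valued field, equipped with the tautological representation $\pi_1(\Sigma)\to\SL_2$ carried by the character variety over a finite extension of $K$, the Morgan--Shalen construction associates an isometric action of $\pi_1(\Sigma)$ on an $\R$-tree $T$. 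The essential computation is that translation lengths are read off from valuations of traces: if $\mu$ is an eigenvalue of $\rho(\alpha)$ then $\hat v(t_\alpha)=\hat v(\mu+\mu^{-1})=\max(\hat v(\mu),-\hat v(\mu))\ge 0$, while the translation length of $\alpha$ on the associated Bruhat--Tits tree equals $2\hat v(t_\alpha)$. Thus, after rescaling the metric of $T$ to match the normalization used for $v_\lambda$, the translation length function of the action is exactly $\alpha\mapsto v(t_\alpha)$.

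Next I would invoke the rigidity of surface-group tree actions. Replacing $T$ by its minimal invariant subtree leaves translation lengths unchanged, and the resulting action is \emph{small}: edge stabilizers in the $\SL_2$-tree are, up to conjugacy, contained in a Borel subgroup, hence cannot contain a nonabelian free group. The Morgan--Otal--Skora theorem then applies, asserting that a minimal small action of the closed hyperbolic surface group $\pi_1(\Sigma)$ on an $\R$-tree is dual to a measured lamination $\lambda$ whose intersection function coincides with the translation length function. Therefore $i(\lambda,\alpha)=v(t_\alpha)$ for every $\alpha\in\pi_1(\Sigma)$, that is $v_\lambda(t_\alpha)=v(t_\alpha)$.

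The remaining domination and uniqueness statements are then formal. For any $f=\sum_\Gamma m_\Gamma t_\Gamma$ expressed in the multicurve basis, the non-archimedean inequality gives $v(f)\le\max\{v(t_\Gamma)\mid m_\Gamma\neq 0\}$. Since $v$ is additive, $v(t_\Gamma)=\sum_i v(t_{\gamma_i})=\sum_i i(\lambda,\gamma_i)=v_\lambda(t_\Gamma)$, and because $v_\lambda$ is simple (Proposition \ref{laminationsimple}) the right-hand side equals $v_\lambda(f)$. Hence $v(f)\le v_\lambda(f)$ for all $f$, which is precisely $v\le v_\lambda$, together with the equality $v(t_\alpha)=v_\lambda(t_\alpha)$ already established. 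Uniqueness holds because a measured lamination is determined by its intersection function $\alpha\mapsto i(\lambda,\alpha)$ (indeed this is how $\ML(\Sigma)$ embeds into $\mathcal{V}$), so any two laminations meeting the conclusion must agree.

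I expect the crux to lie entirely in the first two paragraphs: correctly assembling the Culler--Shalen/Morgan--Shalen passage from an algebraic valuation to a tree action, pinning down the normalization so that $v(t_\alpha)$ is literally a translation length, and verifying the smallness hypothesis required to apply Morgan--Otal--Skora. Once the measured lamination $\lambda$ with $i(\lambda,\cdot)=v(t_\cdot)$ is produced, the domination inequality and uniqueness are immediate from the simplicity of $v_\lambda$ and the additivity of $v$.
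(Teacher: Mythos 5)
Your overall architecture — extend $v$ to the function field, build the Bruhat--Tits/Bass--Serre $\R$-tree for the tautological representation, match translation lengths with $2v(t_\alpha)$, invoke Morgan--Otal--Skora to produce $\lambda$, then deduce domination from simplicity of $v_\lambda$ and uniqueness from injectivity of $\lambda\mapsto i(\lambda,\cdot)$ — is exactly the paper's route, and the third paragraph (domination and uniqueness) is fine. But there is a genuine gap at the step you yourself identify as the crux: the verification that the action is small. Your justification, namely that ``edge stabilizers in the $\SL_2$-tree are, up to conjugacy, contained in a Borel subgroup, hence cannot contain a nonabelian free group,'' is false. The stabilizer of a nondegenerate segment of length $l$ is an Iwahori-type subgroup: it consists of matrices in $\SL_2(\mathcal{O}_{\hat v})$ that are \emph{triangular only modulo} the ideal $\{\hat v\le -l\}$, not genuinely triangular. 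Such subgroups do contain nonabelian free groups in general (already the stabilizer of an edge in the tree of $\SL_2(\Q_p)$ is a finite-index subgroup of $\SL_2(\Z_p)$), so smallness is not an automatic feature of $\SL_2$ tree actions; indeed the theorem would be false for arbitrary real valuations of $K=\C(X(\Sigma))$. Passing to the minimal invariant subtree does not help, since its edge stabilizers are among those of $T$.

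The missing ingredient is that smallness must be extracted from the defining hypothesis on $\mathcal{V}$, namely that $v$ is non-negative on $\C[X(\Sigma)]\setminus\{0\}$. The paper's argument runs: if $\alpha,\beta$ generate a free subgroup fixing a segment of length $l>0$, then after conjugation $\rho(\langle\alpha,\beta\rangle)$ lies in the group of elements of $\SL_2(\mathcal{O}_v)$ whose reduction modulo $\mathcal{M}^l_v=\{v\le -l\}$ is triangular; the commutator is then unipotent in the quotient, so $\Tr\rho([\alpha,\beta])\equiv 2 \pmod{\mathcal{M}^l_v}$, i.e.\ $v(t_{[\alpha,\beta]}-2)\le -l<0$. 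Since $t_{[\alpha,\beta]}-2$ is a nonzero element of $\C[X(\Sigma)]$ (as $\langle\alpha,\beta\rangle$ is free, it admits irreducible representations), this contradicts $v\ge 0$. You need to supply this (or an equivalent) argument; as written, your proof of smallness does not go through. (A secondary, fixable point: the paper actually applies Morgan--Otal first to get a distance-decreasing equivariant map $\Phi:T_\lambda\to T$, which already yields the inequality $v\le v_\lambda$, and only then uses Skora's criterion plus the commutator-trace argument to upgrade $\Phi$ to an isometry onto its image and get the equality on traces; your single appeal to ``Morgan--Otal--Skora for small actions'' is a legitimate packaging of the same results once smallness is actually established.)
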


Denote $K=\C(X(\Sigma))$ the field of rational functions on the character variety. As usual, we extend $v$ to $K$ by setting $v(f/g)=v(f)-v(g)$. Notice however that our sign convention differs from the standard one in valuation theory, for instance our definition of the valuation ring associated to $v$ is $\mathcal{O}_v=\{f\in K, v(f)\le 0\}$.
We begin with the following standard lemma constructing the so-called tautological representation.

\begin{lem}[Tautological representation]
There exists a finite extension $\hat{K}$ of $K$ along with a representation $\rho:\pi_1(\Sigma)\to \SL_2(\hat{K})$ such that $\Tr\rho(\alpha)=t_\alpha$ for all $\alpha\in \pi_1(\Sigma)$.
\end{lem}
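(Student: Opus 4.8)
The plan is to produce the representation by restricting the \emph{universal} representation, which lives over the whole representation variety, to a suitable slice cutting down the conjugation ambiguity. Write $R=\Hom(\pi_1(\Sigma),\SL_2(\C))$ for the representation variety, an affine $\C$-variety carrying the tautological homomorphism $\rho_R\colon\pi_1(\Sigma)\to \SL_2(\C[R])$ sending $\gamma$ to the matrix-valued regular function $\rho\mapsto\rho(\gamma)$. By construction each entry of $\rho_R(\gamma)$ is regular and $\Tr\rho_R(\gamma)$ is the $\SL_2(\C)$-invariant function $t_\gamma$, so it descends through the categorical quotient $\pi\colon R\to X(\Sigma)$ to the class $t_\gamma\in\C[X(\Sigma)]\subseteq K$. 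Thus $\rho_R$ already has the correct traces; the only defect is that $\C(R)$ is not a finite extension of $K$, its transcendence degree over $K$ being the dimension $3$ of a generic conjugation orbit.

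First I would reduce the problem to finding an irreducible locally closed subvariety $S\subseteq R$ which dominates $X(\Sigma)$ with \emph{generically finite} fibres. Granting such an $S$, its function field $\hat K=\C(S)$ is a finite extension of $K$ pulled back along $\pi|_S$, and the restriction $\rho=\rho_R|_S\colon\pi_1(\Sigma)\to\SL_2(\hat K)$ is visibly a homomorphism with $\Tr\rho(\gamma)=t_\gamma|_S=t_\gamma\in K\subseteq\hat K$, as required. The virtue of this formulation is that the homomorphism property and the trace identities come for free from $\rho_R$; everything is reduced to the existence of the slice.

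To build $S$ I would use that for $g\ge 2$ the irreducible representations form a dense open $R^{\mathrm{irr}}\subseteq R$ on which $\mathrm{PGL}_2(\C)$ acts freely, with orbits equal to the fibres of $\pi$; since $\dim R=6g-3$ and $\dim X(\Sigma)=6g-6$, a slice transverse to these $3$-dimensional orbits is exactly what is needed. Concretely, one fixes a pair $\alpha,\beta$ whose generic representation is irreducible and normalises the frame: conjugate so that $\rho(\alpha)$ is diagonal and a chosen off-diagonal coefficient of $\rho(\beta)$ is fixed. Diagonalising $\rho(\alpha)$ forces one to adjoin a root of $T^2-t_\alpha T+1$, which is the source of the finite (here quadratic) extension, while the residual torus normalises $\rho(\beta)$ and rigidifies the orbit. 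One may also bypass the slice and write the matrices explicitly over such a $\hat K$: for an irreducible pair the matrices $I,\rho(\alpha),\rho(\beta),\rho(\alpha\beta)$ form a basis of $M_2(\hat K)$, so every $\rho(\gamma)$ is determined by the four traces $t_\gamma,t_{\gamma\alpha},t_{\gamma\beta},t_{\gamma\alpha\beta}\in K$, and the relations of Procesi's presentation guarantee that $\gamma\mapsto\rho(\gamma)$ is multiplicative.

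The main obstacle I expect is verifying that the slice meets the generic orbit in \emph{finitely many} points rather than merely dominating $X(\Sigma)$, that is, that the normalisation genuinely rigidifies the $\mathrm{PGL}_2$-action; this rests on generic irreducibility and freeness of the action, which must be checked for surface groups of genus $g\ge 2$. Conceptually the finiteness of the extension is best explained by the Brauer-theoretic picture: $\rho_R$ endows the generic fibre with an Azumaya algebra of rank $4$ over $K$ whose reduced trace recovers the $t_\gamma$, and $\hat K$ is nothing but a finite splitting field of this algebra, over which it becomes $M_2(\hat K)$ and the representation lands in $\SL_2(\hat K)$.
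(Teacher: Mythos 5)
Your argument is correct, but it takes a different route from the paper. The paper's own proof is a soft GIT argument: it views the inclusion $\C[X(\Sigma)]\hookrightarrow\overline{K}$ as a $\overline{K}$-point of $X(\Sigma)$, lifts it through the surjection $\Hom(\pi_1(\Sigma),\SL_2(\overline{K}))\to X(\Sigma)(\overline{K})$ to get a representation over the algebraic closure, and then observes that finite generation of $\pi_1(\Sigma)$ forces the matrix entries to lie in a finitely generated, hence finite, subextension of $\overline{K}/K$. You instead stay over $\C$ and build a finite cover of $X(\Sigma)$ inside the representation variety by rigidifying the conjugation action on a slice; this is essentially the ``down-to-earth'' proof the paper only cites (Proposition 3.3 of Marché's skein paper, via K.~Saito's theorem), and it buys you more, namely that a \emph{quadratic} extension suffices, whereas the paper's lifting argument gives no control on the degree. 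The trade-off is that your route needs a few facts the GIT argument avoids: irreducibility of (the relevant component of) $R$ so that $\C(S)$ and the dominance statement make sense, generic irreducibility of the restriction to the chosen pair $\langle\alpha,\beta\rangle$, and the verification that the normalisation ($\rho(\alpha)$ diagonal, one off-diagonal entry of $\rho(\beta)$ fixed) really cuts the generic $\mathrm{PGL}_2$-orbit in finitely many points --- you correctly flag this last point, and it does work: after diagonalising $\rho(\alpha)$ (two choices of eigenvalue ordering) the residual torus is killed by the normalisation of $\rho(\beta)$, so the generic fibre has two points. All of these are standard for $g\ge 2$, so the proposal is sound; it would just be worth stating explicitly which irreducibility results you are invoking.
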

\begin{proof}
This follows from classical arguments in geometric invariant theory. In a nutshell, consider an algebraic closure $\overline{K}$ of $K$. One may interpret the inclusion $\C[X(\Sigma)]\to\overline{K}$ as a $\overline{K}$-point of $X(\Sigma)$.
As the map $\Hom(\pi_1(\Sigma),\SL_2(\overline{K}))\to X(\Sigma)(\overline{K})$ is surjective, we have a representation $\rho:\pi_1(\Sigma)\to \SL_2(\overline{K})$ satisfying $\Tr \rho(\gamma)=t_\gamma\in K$. As $\pi_1(\Sigma)$ is finitely generated, the coefficients of $\rho$ may be taken in a finite extension of $K$. One may also consult \cite[Proposition 3.3]{Marche_sl2-skein_2016} for a down-to-earth proof using K. Saito's theorem which even shows that a quadratic extension suffices.
\end{proof}

\begin{proof}[Proof of Theorem \ref{domination_val_ml}]

There exists (see \cite{Vaquie_extension-valuation_2007} for a proof) a valuation $\hat{v}:\hat{K}^*\to \R$ extending $v$, so we may consider the Bass-Serre real tree $T$ associated to the pair $(\hat{K},\hat{v})$. 
We know from \cite[Corollary III.1.2]{Morgan-Otal_1993} that there exists a measured lamination $\lambda$ with associated real tree $T_\lambda$ and an equivariant morphism of $\R$-trees $\Phi:T_\lambda\to T$ which decreases the distance (by Step 1 of Lemma I.1.1). Thus for any $\alpha \in \pi_1(\Sigma)$, the translation length of the action of $\alpha$ on $T_\lambda$, which equals $i(\lambda, \alpha)$, is greater than the translation length of $\alpha$ acting on $T$ which is $\max\{0,2\hat{v}(t_\alpha)\}=2v(t_\alpha)$. 

It follows in particular that $2v(t_\gamma)\le v_\lambda(t_\gamma)$ for every simple curve $\gamma$, therefore this holds also over multicurves. For any $f\in\C[X(\Sigma)]$ expanded as $f=\sum w_\Gamma t_\Gamma$, we get 
\[
v(f)\le \max\{v(t_\Gamma),w_\Gamma \neq 0\}\leq \frac{1}{2}\max\{v_\lambda(t_\Gamma),w_\Gamma\neq 0\}=\frac{1}{2}v_{\lambda}(f)
\]
where the last equality follows by Proposition \ref{laminationsimple}.

If we prove that $\Phi$ is an isometry on its image then we are done. Indeed this implies that the translation lengths of the actions of $\alpha\in \pi_1(\Sigma)$ on $T_\lambda$ and $T$ coincide, in other words that $v(t_\alpha)=\frac{1}{2}v_\lambda(t_\alpha)$.
But by Skora's theorem \cite{Skora_splittings_1996}, the morphism of $\R$-trees $\Phi:T_\lambda\to T$ mentionned above is an isometry on its image if and only if there does not exist any free subgroup $F_2\subset \pi_1(\Sigma)$ stabilizing a non-trivial edge in $\Phi(T_\lambda)$. 

Hence suppose by contradiction there exist $\alpha,\beta\in \pi_1(\Sigma)$ generating a free subgroup which fixes a non-trivial edge in $T$ of length $l$. Following for instance \cite[section 4.2]{Otal_compact-repres_2012}, this implies that up to conjugation, the tautological representation restricted to $F_2$ take its values in 
\[
G_l=\left\{\begin{pmatrix} a & b\\ c& d\end{pmatrix} \Big| \: ad-bc=1,\, v(a)\le 0,\, v(b)\le 0,\, v(c)\le -l,\, v(d)\le 0\right\}.
\]
If $\mathcal{M}^l_v$ denotes the ideal of $\mathcal{O}_v$ defined by the equation $v\le -l$ then $G_l$ consists precisely of those elements in $\SL_2(\mathcal{O}_v)$ projecting to triangular matrices in $\SL_2(\mathcal{O}_v/\mathcal{M}^l_v)$. The commutator $\rho([\alpha,\beta])$ is then unipotent in this quotient and we get $\Tr \rho([\alpha,\beta])=2 \mod{\mathcal{M}_v^l}$.
This means that $v(t_{[\alpha,\beta]}-2)\le -l$, contradicting the fact that $v$ is non-negative over $\C[X(\Sigma)]$.
\end{proof}


\begin{Cor}[Simple valuations are measured laminations]\label{simple=ML}
\label{simple=lamination}
A valuation $v\in\mathcal{V}$ is simple if and only if it has the form $v_\lambda$ for some measured lamination $\lambda$.
\end{Cor}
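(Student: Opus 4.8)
The plan is to prove the equivalence in Corollary \ref{simple=ML} by establishing both implications, where the forward direction (simple $\Rightarrow$ measured lamination) is the substantive one and the reverse is already handled.

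One implication is immediate: Proposition \ref{laminationsimple} shows that every $v_\lambda$ is a simple valuation, so any valuation of the form $v_\lambda$ is simple. It remains to prove the converse, namely that a simple valuation $v$ must equal $v_\lambda$ for some measured lamination $\lambda$. The key tool is Theorem \ref{domination_val_ml}: for our given simple $v$, it produces a measured lamination $\lambda$ with $v \le v_\lambda$ and, crucially, with the equality $v(t_\alpha)=v_\lambda(t_\alpha)$ for \emph{all} $\alpha \in \pi_1(\Sigma)$. My strategy is to upgrade this equality on the trace functions $t_\alpha$ to an equality of valuations on all of $\C[X(\Sigma)]$, using precisely the hypothesis that $v$ is simple.

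The central step is to compare the two valuations on the multicurve basis. First I would note that $v$ and $v_\lambda$ agree on every $t_\gamma$ for $\gamma$ a simple curve, since these are among the $t_\alpha$. Because a multicurve satisfies $t_\Gamma=\prod_i t_{\gamma_i}$ and both $v$ and $v_\lambda$ are valuations (hence additive on products), it follows that $v(t_\Gamma)=\sum_i v(t_{\gamma_i})=\sum_i v_\lambda(t_{\gamma_i})=v_\lambda(t_\Gamma)$ for every multicurve $\Gamma$. So $v$ and $v_\lambda$ coincide on the entire multicurve basis. Now take an arbitrary $f=\sum m_\Gamma t_\Gamma \in \C[X(\Sigma)]\setminus\{0\}$. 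Since both $v$ and $v_\lambda$ are simple, each of them computes $v(f)$ and $v_\lambda(f)$ by the maximum formula \eqref{simple} over the basis elements with $m_\Gamma \ne 0$; as the individual values $v(t_\Gamma)=v_\lambda(t_\Gamma)$ agree, the maxima agree, giving $v(f)=v_\lambda(f)$. Thus $v=v_\lambda$.

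I do not anticipate a serious obstacle here, since the hard analytic work has already been discharged by Theorem \ref{domination_val_ml} and Proposition \ref{laminationsimple}; the proof is essentially the observation that a simple valuation is determined by its values on the multicurve basis, and that Theorem \ref{domination_val_ml} pins down exactly those values (through the $t_\alpha$) for the dominating lamination. The one point requiring a little care is that the maximum formula \eqref{simple} must be invoked for \emph{both} valuations simultaneously, which is legitimate precisely because $v$ is assumed simple and $v_\lambda$ is simple by Proposition \ref{laminationsimple}. This is what lets me pass from agreement on basis elements to agreement on all of $\C[X(\Sigma)]$ without any further domination argument.
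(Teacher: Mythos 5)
Your proof is correct and follows exactly the route the paper intends: the paper gives no explicit proof of this corollary precisely because it is the immediate combination of Proposition \ref{laminationsimple} (for one direction) and Theorem \ref{domination_val_ml} together with multiplicativity on $t_\Gamma=\prod_i t_{\gamma_i}$ and the formula \eqref{simple} applied to both valuations (for the other). Nothing is missing.
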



\begin{Cor}\label{aut_preserves_ml}
The action of $\Aut(X(\Sigma))$ on $\Val$ preserves $\ML$.
\end{Cor}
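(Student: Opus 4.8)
The plan is to combine Corollary \ref{simple=ML}, the sharp domination of Theorem \ref{domination_val_ml}, and the genericity statements of Section 1. Throughout, $\Aut(X(\Sigma))$ acts on $\Val$ by $\Phi\cdot v=v\circ\Phi^{-1}$; this action is continuous and visibly preserves the domination order $\le$, since $v\le w$ is read off the inequalities $v(f)\le w(f)$ and $\Phi\cdot$ is an order-preserving bijection. By Corollary \ref{simple=ML} it suffices to prove that $\Phi\cdot v_\lambda$ is again a simple valuation for $\lambda$ ranging over a dense subset of $\ML$, and then invoke continuity together with the fact (noted after Proposition \ref{laminationsimple}) that the simple valuations form a closed subset.

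First I would isolate the following rigidity of strict laminations: if a valuation $w$ satisfies $w\le v_\mu$ together with $w(t_\alpha)=v_\mu(t_\alpha)$ for all $\alpha\in\pi_1(\Sigma)$, and $\mu$ is strict, then $w=v_\mu$. Indeed, multiplicativity of $w$ propagates the equality of traces to $w(t_\Gamma)=v_\mu(t_\Gamma)$ over all multicurves, and strictness makes these values pairwise distinct; hence for any $f=\sum m_\Gamma t_\Gamma$ the maximum $\max\{w(t_\Gamma)\mid m_\Gamma\neq 0\}$ is attained only once, so no cancellation can lower the value and $w(f)=\max\{v_\mu(t_\Gamma)\mid m_\Gamma\neq 0\}=v_\mu(f)$ by Proposition \ref{laminationsimple}. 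Thus sharp domination by a \emph{strict} lamination is automatically an equality.

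Next, using Theorem \ref{domination_val_ml} I define $\Psi\colon\ML\to\ML$ by letting $v_{\Psi(\lambda)}$ be the unique lamination valuation sharply dominating $\Phi\cdot v_\lambda$; it is continuous and homogeneous, and descends to a continuous self-map $\bar\Psi$ of the compact space $\PML$. The automorphism $\Phi^{-1}$ furnishes in the same way a map $\Psi'$, and applying $\Phi^{-1}\cdot$ to $\Phi\cdot v_\lambda\le v_{\Psi(\lambda)}$ gives $v_\lambda\le\Phi^{-1}\cdot v_{\Psi(\lambda)}\le v_{\Psi'\Psi(\lambda)}$, that is $i(\lambda,\cdot)\le i(\Psi'\Psi(\lambda),\cdot)$ on $\pi_1(\Sigma)$. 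The step I expect to be the main obstacle is to upgrade this inequality to the statement that $\bar\Psi$ is a homeomorphism. Here I would use that by Masur's theorem the filling uniquely ergodic laminations form a full-measure, hence dense, subset of $\ML$: for such $\lambda$, approximating $\nu=\Psi'\Psi(\lambda)$ by weighted simple curves $\gamma_k$ and passing to the limit in $i(\lambda,\gamma_k)\le i(\nu,\gamma_k)$ yields $i(\lambda,\nu)=0$, which forces $\nu$ to be projectively $\lambda$ because $\lambda$ fills and is uniquely ergodic. Since $\bar\Psi'\bar\Psi$ then fixes a dense set of points and is continuous on the Hausdorff space $\PML$, it is the identity; the symmetric argument gives $\bar\Psi\bar\Psi'=\mathrm{id}$, so $\bar\Psi$ is a homeomorphism with inverse $\bar\Psi'$.

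Finally, $\bar\Psi$ is piecewise integral-linear, hence locally bi-Lipschitz, so it pulls back Thurston-null sets to null sets. By Proposition \ref{most_ml=strict} the non-strict laminations are null, so $\Psi(\lambda)$ is strict for almost every, hence a dense set of, $\lambda$. For these the rigidity lemma gives $\Phi\cdot v_\lambda=v_{\Psi(\lambda)}\in\ML$, and continuity together with the closedness of the simple valuations yields $\Phi\cdot\ML\subseteq\ML$. Running the same reasoning for $\Phi^{-1}$ gives the reverse inclusion, so $\Aut(X(\Sigma))$ preserves $\ML$.
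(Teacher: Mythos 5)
Your argument is correct in outline but takes a genuinely different route from the paper's. The paper never constructs the induced map $\Psi$ on $\ML$: it instead isolates the purely order-theoretic class of \emph{untameable} valuations (those $u$ such that every $v\ge u$ is a positive multiple of $u$), whose invariance under $\Aut(X(\Sigma))$ is immediate, shows via Theorem \ref{domination_val_ml} that untameable valuations lie in $\ML$, and proves that strict uniquely ergodic laminations are untameable --- essentially your rigidity computation plus the unique-ergodicity step, but applied to an arbitrary $w\ge v_\lambda$ rather than to $\Phi\cdot v_\lambda$. Density of these laminations and closedness of $\ML$ in $\Val$ then conclude exactly as in your last paragraph. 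The structural difference is that the paper imposes the genericity hypotheses (strict, uniquely ergodic) on the \emph{source} lamination $\lambda$, where Proposition \ref{most_ml=strict} and Masur's theorem apply directly, whereas you impose strictness on the \emph{target} $\Psi(\lambda)$, which forces you to prove that $\Psi$ transports the Thurston measure class.

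That transport step is the one real weakness of your proposal: the assertion that $\bar\Psi$ is piecewise integral-linear with locally bi-Lipschitz inverse is stated without proof and carries genuine content (you need that each $i(\Psi(\cdot),\alpha)=\max\{i(\cdot,\Gamma)\mid \Gamma\in\operatorname{supp}\Phi^{-1}t_\alpha\}$ is PIL in train-track charts, that finitely many such coordinates give a PIL parametrization, and that the inverse --- which is $\Psi'$ only after a pointwise rescaling by the continuous function $C'$ --- is still locally Lipschitz). This can all be justified, but none of it is needed: within your own setup, for $\lambda$ strict and uniquely ergodic you have already shown $\Psi'\Psi(\lambda)=C\lambda$ with $C\ge 1$, and $C\lambda$ is strict because $\lambda$ is. Your rigidity lemma applied to $w=\Phi^{-1}\cdot v_{\Psi(\lambda)}$ and $\mu=C\lambda$ (the hypotheses $w\le v_{C\lambda}$ and $w(t_\alpha)=v_{C\lambda}(t_\alpha)$ hold by Theorem \ref{domination_val_ml}) gives $\Phi^{-1}\cdot v_{\Psi(\lambda)}=Cv_\lambda$, hence $\Phi\cdot v_\lambda=v_{\Psi(\lambda)/C}\in\ML$ on the dense set of strict uniquely ergodic $\lambda$, with no information about strictness of $\Psi(\lambda)$ required. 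With that substitution your proof closes cleanly and is, in substance, the paper's.
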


\begin{proof}
The main idea of the proof is to notice that the set $\mathcal{U}$ of \emph{untameable} valuations defined as those $u\in \Val$ such that for all $v\in \Val$, if $u\leq v$ then $v=Cu$ for some $C\geq 1$ is preserved by the action of $\Aut(X(\Sigma))$. We deduce from Theorem \ref{domination_val_ml} that $\mathcal{U}\subset \ML$ and we will show that $\mathcal{U}$ is dense in $\ML$. The next lemma provides sufficient conditions to be untameable. 
\begin{Lem}
If $\lambda \in \ML$ is strict and uniquely ergodic then $v_\lambda\in \Val$ is untameable.
\end{Lem}

\begin{proof}
Let $\lambda$ be a strict and uniquely ergodic measured lamination. 
Suppose there exists another valuation $w\in \Val$ with $v_\lambda \le w$. Applying Theorem \ref{domination_val_ml}, there exists another measured lamination $\mu$ with $w\le v_\mu$ and $w(t_\alpha)=v_\mu(t_\alpha)$ for all $\alpha \in \pi_1(\Sigma)$. So $v_\lambda \le v_\mu$ but as $\lambda$ is positive and uniquely ergodic, there exists $C\ge 1$ such that $\mu=C\lambda$. But $w$ and $C\lambda$ are strict and coincide on simple curves, so $w=C\lambda$. 
\end{proof}

We deduce from Masur's Theorem saying that the set of uniquely ergodic measured laminations has full measure in $\ML$ (see \cite[Theorem 2]{Masur_interval-exchange_1982}), that $\mathcal{U}$ is dense in $\ML$.
But $\ML$ is closed in $\Val$ as it coincides with the set of simple valuations by Corollary \ref{simple=lamination}, and $\Aut(X(\Sigma))$ acts continuously on $\Val$ so it preserves the closure of $\mathcal{U}$.
\end{proof}

\section{Discrete valuations and disjointness}

We call \emph{discrete} a valuation in $\mathcal{V}$ whose finite values belong to $\N$, and likewize a measured lamination whose associated valuation is discrete. One expects the underlying lamination to be a multicurve, this is almost true :

\begin{lem}
Discrete measured lamination are precisely the weighted multicurves $\frac{1}{2}\Gamma$ such that the class of $\Gamma$ in $H_1(\Sigma,\Z/2\Z)$ vanishes.
\end{lem}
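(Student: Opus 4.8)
The plan is to first translate discreteness into a statement purely about intersection numbers, and then treat the two inclusions separately. Since $v_\lambda$ is a simple valuation (Proposition \ref{laminationsimple}), its set of finite values is exactly $\{v_\lambda(t_\Gamma)\}=\{\sum_i i(\lambda,\gamma_i)\}$ as $\Gamma$ ranges over multicurves, because $v_\lambda(f)=\max\{v_\lambda(t_\Gamma)\mid m_\Gamma\neq 0\}$ and each $v_\lambda(t_\Gamma)$ is attained. Hence $v_\lambda$ is discrete if and only if $i(\lambda,\gamma)\in\N$ for every simple closed curve $\gamma$. For the easy inclusion I would use that the geometric intersection number of two simple closed curves reduces mod $2$ to the algebraic intersection pairing on $H_1(\Sigma;\Z/2\Z)$: if $[\Gamma]=0$ then $i(\Gamma,\gamma)$ is even for every $\gamma$, so $i(\tfrac12\Gamma,\gamma)=\tfrac12 i(\Gamma,\gamma)\in\N$, and $v_{\frac12\Gamma}$ is discrete.

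For the converse, suppose $v_\lambda$ is discrete, so that all $i(\lambda,\gamma)$ are integers. The first, and I expect the hardest, step is to show that $\lambda$ is a weighted multicurve at all, namely that it has no minimal component which is not a closed leaf. Here I would invoke the integral piecewise-linear structure of $\ML(\Sigma)$: in Dehn--Thurston (or train-track) coordinates every coordinate of $\lambda$ is recovered from finitely many intersection numbers $i(\lambda,\gamma)$ by integral piecewise-linear formulas, so rational (a fortiori integer) intersection numbers force $\lambda$ to have rational coordinates, and the rational points of $\ML(\Sigma)$ are precisely the rational weighted multicurves (see \cite{F-L-P_travaux-thurston_1979}). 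An irrational minimal component would fill a subsurface of negative Euler characteristic and cannot occur in a rational multicurve, giving the desired contradiction. One then writes $\lambda=\sum_{j=1}^m w_j\gamma_j$ with distinct, pairwise disjoint essential simple closed curves $\gamma_j$ and weights $w_j>0$.

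The remaining half-integrality and homological conditions I would extract by a short lattice computation. Set $\phi(\gamma)=\big(i(\gamma_j,\gamma)\big)_{j}\in\N^m$ and let $L=\langle \phi(\gamma)\rangle_{\Z}\subseteq\Z^m$ be the lattice they generate; since $i(\lambda,\gamma)=\langle w,\phi(\gamma)\rangle$, discreteness says precisely that $w$ lies in the dual lattice $L^\vee$. The key geometric input is that $2\Z^m\subseteq L$: for each $j$, cutting $\Sigma$ along the other components and taking a simple closed curve dual to $\gamma_j$ in the piece containing it produces a curve $\delta_j$ disjoint from all $\gamma_k$ with $k\neq j$ and meeting $\gamma_j$ once or twice, so $\phi(\delta_j)\in\{e_j,2e_j\}$ and $2e_j\in L$ in either case. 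Consequently $L^\vee\subseteq\tfrac12\Z^m$, so $w=\tfrac12 q$ with $q\in\Z^m$, and $\Gamma:=2\lambda=\sum_j q_j\gamma_j$ is an integral multicurve.

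Finally, reducing modulo $2$, the condition $w=\tfrac12 q\in L^\vee$ reads $\langle q,\phi(\gamma)\rangle\in 2\Z$ for all $\gamma$, that is $\sum_j q_j\langle[\gamma_j],[\gamma]\rangle=0$ in $\Z/2\Z$ for all $\gamma$; since the classes $[\gamma]$ of simple closed curves generate $H_1(\Sigma;\Z/2\Z)$ and the intersection form is non-degenerate, this is equivalent to $\sum_j q_j[\gamma_j]=[\Gamma]=0$. This yields $\lambda=\tfrac12\Gamma$ with $[\Gamma]=0$, completing the characterization. The main obstacle throughout is the appeal to the structure theory of $\ML(\Sigma)$ needed to rule out irrational components; once $\lambda$ is known to be a multicurve, the half-integrality and the parity condition follow by elementary considerations.
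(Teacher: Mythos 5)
Your proof is correct, but it follows a genuinely different route from the one the paper chooses to present. The paper dismisses the coordinate approach in one sentence (``can be shown using Dehn's coordinates'') and instead gives a valuation-theoretic argument: a discrete valuation has a simplicial Bass--Serre tree, the Morgan--Otal--Skora theorem degenerates to Stallings' theorem so the $\pi_1(\Sigma)$-action is dual to a multicurve $\Gamma$ with $v(t_\alpha)=\tfrac12 i(\Gamma,\alpha)$, and the parity condition $[\Gamma]=0$ in $H_1(\Sigma,\Z/2\Z)$ falls out of Poincar\'e duality. You instead elaborate the route the paper only alludes to: you correctly reduce discreteness to $i(\lambda,\gamma)\in\N$ for all simple closed curves (using simplicity of $v_\lambda$ and that single curves are basis elements), use the integral PL structure of $\ML$ in Dehn--Thurston coordinates to rule out non-atomic components, and then run a clean lattice-duality argument: the existence, for each component $\gamma_j$, of a dual curve $\delta_j$ disjoint from the other components and meeting $\gamma_j$ once or twice gives $2\Z^m\subseteq L$, hence $w\in L^\vee\subseteq\tfrac12\Z^m$, and reducing mod $2$ recovers the homological condition from non-degeneracy of the mod-$2$ intersection form. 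The trade-off: the paper's proof stays inside the valuation/tree formalism developed in Section 2 and gets the factor $\tfrac12$ for free from the translation-length formula, whereas your proof front-loads the hard input into the FLP classification (only the step ``discrete $\Rightarrow$ weighted multicurve'' really needs it --- your lattice argument works for arbitrary real weights once $\lambda$ is known to be a multicurve) but is otherwise elementary and, unlike the paper's sketch, spells out both inclusions, in particular the easy direction via $i(\Gamma,\gamma)\equiv\langle[\Gamma],[\gamma]\rangle \pmod 2$.
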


\begin{proof}
This lemma can be shown using Dehn's coordinates, see \cite[Exposé 6]{F-L-P_travaux-thurston_1979}.
To give a proof more in the spirit of this note, we observe that the Bass-Serre tree associated to a discrete valuation has a simplicial structure. Let us mimick the proof of Theorem \ref{domination_val_ml}.

We extend the discrete valuation $v$ on $K$ to a discrete valuation $\hat{v}$ on $\hat{K}$ whose Bass-Serre tree $T$ is thus simplicial. The Morgan-Otal-Skora theorem reduces to a theorem of Stallings stating that the action of $\pi_1(\Sigma)$ on $T$ is dual to a multicurve $\Gamma$. This implies $v(t_\alpha)=\frac{1}{2}i(\Gamma,\alpha)$, so $v$ is associated to the measured lamination $\lambda=\frac{1}{2}\Gamma$.

Computing modulo $\Z$ the weighted intersection of $\frac{1}{2}\Gamma$ with any curve $\gamma$ one should get $0$: by Poincaré duality this shows that the class of $\Gamma$ in $H_1(\Sigma,\Z/2\Z)$ vanishes.
%
%
%
\end{proof}

An automorphism of $\C[X(\Sigma)]$ obviously preserves discrete valuations, hence by Corollary \ref{aut_preserves_ml} it preserves discrete measured laminations. It also preserves {\it indecomposable} discrete measured laminations, i.e. those that cannot be written as sum with positive coefficients of other ones. The previous lemma shows that indecomposable discrete measured laminations are precisely (possibly halves of) simple curves.

\begin{Prop}\label{preserve-curves}
The group $\Aut(X(\Sigma))$ preserve the set 
$$\Curv=\{v_\gamma |\gamma \textrm{ simple and non-separating}\}\cup \{\frac{1}{2}v_\gamma |\gamma \textrm{ simple and separating}\}.$$
\end{Prop}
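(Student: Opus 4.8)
The plan is to realise $\Curv$ as the subset of $\ML$ cut out by properties that are manifestly invariant under $\Aut(X(\Sigma))$, and then to match this subset with single simple curves through the preceding lemma. First I would record the structural features of the action. Any $g\in\Aut(X(\Sigma))$ acts on $\Val$ by $v\mapsto v\circ g^{-1}$, and this action preserves the value monoid $\{-\infty\}\cup\N$, so $g$ preserves the discrete valuations; combined with Corollary~\ref{aut_preserves_ml} it therefore preserves the discrete measured laminations, which by the previous lemma are exactly the $\frac12\Gamma$ with $[\Gamma]=0$ in $H_1(\Sigma,\Z/2\Z)$. Moreover $g$ preserves the domination order $\le$, since the latter is defined purely on $\C[X(\Sigma)]$, and it commutes with scaling: expanding $g^{-1}t_\alpha=\sum_\Gamma c_\Gamma t_\Gamma$ and using Proposition~\ref{laminationsimple} gives
\[
i(g\lambda,\alpha)=v_\lambda(g^{-1}t_\alpha)=\max\{\, v_\lambda(t_\Gamma)\mid c_\Gamma\neq 0\,\},
\]
a positively homogeneous expression in $\lambda$, so $g$ also descends to $\PML$.

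Next I would characterise $\Curv$ inside the discrete laminations by two conditions: a discrete lamination lies in $\Curv$ if and only if it is \emph{(i)} minimal for the domination order among nonzero discrete laminations, and \emph{(ii)} indecomposable in the cone $\ML$, in the sense of not being a sum $\mu+\nu$ of two non-proportional laminations. Condition \emph{(i)} rules out the multiples $n\,v_\gamma$ and $\tfrac n2 v_\gamma$ with $n\ge 2$, leaving the primitive laminations $\frac12\Gamma$ whose $\Gamma$ carries no proper mod-$2$ nullhomologous sub-multicurve. Condition \emph{(ii)} then forces $\Gamma$ to be connected: a primitive $\frac12\Gamma$ with two or more components, for instance a pair of disjoint non-separating curves sharing the same class in $H_1(\Sigma,\Z/2\Z)$, splits as $\frac12\gamma_1+\frac12\gamma_2$ into non-proportional summands and is excluded, even though it cannot be written as a sum of two \emph{discrete} laminations. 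Connectedness of $\Gamma$ then leaves, via the previous lemma, exactly $v_\gamma$ for $\gamma$ non-separating and $\frac12 v_\gamma$ for $\gamma$ separating, which is precisely $\Curv$.

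Condition \emph{(i)} is invariant because $g$ preserves both discreteness and $\le$. The main obstacle is the invariance of condition \emph{(ii)}. Since we do not yet know that $g$ acts linearly on $\ML$ — this is in essence what the main Theorem will ultimately assert — the map $g$ is a priori only a positively homogeneous, piecewise-linear homeomorphism, so additivity is unavailable and a cone decomposition of $g\lambda$ cannot be transported back through $g^{-1}$ term by term. I would instead detect indecomposability intrinsically through the support: for discrete laminations one has $\operatorname{supp}\mu\subseteq\operatorname{supp}\lambda$ exactly when every $t_\alpha$ with $v_\lambda(t_\alpha)=0$ also satisfies $v_\mu(t_\alpha)=0$, a containment encoded by the valuation rings $\mathcal{O}_{v_\lambda}$, and $\lambda$ is indecomposable precisely when the only laminations supported inside $\operatorname{supp}\lambda$ are its multiples. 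Reading the connectedness of the support, equivalently the cone-indecomposability, off this $\Aut$-invariant ring data is the delicate point, and it is exactly the disjointness analysis underlying this section that should supply it.
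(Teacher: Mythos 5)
Your overall strategy coincides with the paper's: single out $\Curv$ inside the discrete measured laminations (which are preserved, as you argue, by combining invariance of $\N$-valuedness with Corollary~\ref{aut_preserves_ml} and the classification lemma) by a minimality condition plus an indecomposability condition. Your verification that these two conditions cut out exactly $\Curv$ is correct, including the instructive example $\frac12(\gamma_1\cup\gamma_2)$. You have also put your finger on the one genuinely delicate point, which the paper passes over in a single sentence: since the action of $\Aut(X(\Sigma))$ on $\ML$ is not yet known to be additive, one cannot transport a decomposition $\lambda=\mu+\nu$ through $g$ term by term. But you then leave this point unresolved, deferring it to ``the disjointness analysis underlying this section.'' That deferral does not work as stated: Lemma~\ref{preserve-disjointness} computes Krull dimensions of $\mathcal{O}_\gamma^+\cap\mathcal{O}_\delta^+$ for a \emph{pair} of curves in order to feed Ivanov's theorem, and is not set up to detect connectedness of the support of a single discrete lamination. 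As written, your proof therefore has a genuine gap precisely at the step that makes the proposition nontrivial.

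The gap is closable with ingredients you have already listed, and more cheaply than via supports and valuation rings. For a weighted multicurve $\lambda=\sum c_i\gamma_i$, indecomposability in the cone is equivalent to the following order-theoretic statement: every nonzero $\mu\in\ML$ with $v_\mu\le v_\lambda$ is a positive multiple of $\lambda$. Indeed, $v_\mu\le v_\lambda$ forces $i(\mu,\gamma_i)=0$ for each $i$, and testing against a curve $\alpha$ meeting a prospective extra component of $\mu$ but missing $\Gamma$ shows $\operatorname{supp}\mu\subseteq\bigcup\gamma_i$, hence $\mu=\sum d_i\gamma_i$ with $0\le d_i\le c_i$; this is proportional to $\lambda$ for all such $\mu$ exactly when $\Gamma$ has a single isotopy class of component, and conversely a second component immediately yields a dominated non-proportional $\mu$. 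This reformulation quantifies only over $\ML$ (preserved by Corollary~\ref{aut_preserves_ml}), the domination order (defined via $\C[X(\Sigma)]$, hence preserved), and proportionality (preserved since the action commutes with scaling, as you note) — so it is manifestly $\Aut(X(\Sigma))$-invariant, and this is in effect the same mechanism as the ``untameable'' trick already used in the proof of Corollary~\ref{aut_preserves_ml}. With that substitution your argument is complete and agrees with the paper's.
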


To complete the proof of the main theorem our strategy is to apply Ivanov's theorem \cite{Ivanov_curve-complex_1997}, stating that the permutation group of $\Curv$ mapping disjoint curves to disjoint curves is reduced to the mapping class group $\Mod(\Sigma)$.
The key idea in showing preservation of disjointness is to consider for a simple curve $\gamma$ the ring $\mathcal{O}_\gamma^+=\C[X(\Sigma)]\cap \mathcal{O}_v$ where $v$ is the discrete valuation associated to $\gamma$. This is the coordinate ring for the image of the natural map $X(\Sigma)\to X(\Sigma\setminus\gamma)$: generically its fibers are orbits under the Goldman twist flow ; in particular we have $\dim \mathcal{O}_\gamma^+=\dim \C[X(\Sigma)] -1=6g-7$ where $\dim$ is the Krull dimension.

\begin{Lem}\label{preserve-disjointness}
For simple curves $\gamma$ and $\delta$ one has $\dim \mathcal{O}_\gamma^+\cap \mathcal{O}_\delta^+\le\dim \C[X(\Sigma)]-2$ with equality if and only if $i(\gamma,\delta)=0$.
\end{Lem}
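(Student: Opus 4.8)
The plan is to compute the Krull dimension through the identification of $\mathcal{O}_\gamma^+$ with a span of multicurves, and then with twist-invariant functions. First I would use that $v_\gamma$ is a simple valuation (Proposition \ref{laminationsimple}), nonnegative on $\C[X(\Sigma)]$: membership $f\in\mathcal{O}_\gamma^+$ forces $v_\gamma(f)=0$, so by the $(\max)$ relation every multicurve $\Gamma$ occurring in the expansion $f=\sum m_\Gamma t_\Gamma$ must satisfy $i(\gamma,\Gamma)=0$. Hence $\mathcal{O}_\gamma^+=\operatorname{Span}\{t_\Gamma\mid i(\gamma,\Gamma)=0\}$, and intersecting the two descriptions,
\[
\mathcal{O}_\gamma^+\cap\mathcal{O}_\delta^+=\operatorname{Span}\{t_\Gamma\mid i(\gamma,\Gamma)=i(\delta,\Gamma)=0\},
\]
the span of the multicurves that can be isotoped into the complement of $\gamma\cup\delta$.

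Next I would translate this into an orbit count. A basis element $t_\Gamma$ with $i(\gamma,\Gamma)=0$ depends only on the restriction of a representation to $\Sigma\setminus\gamma$, hence is invariant under the Goldman twist flow along $\gamma$; conversely these span the invariants. Thus $f\in\mathcal{O}_\gamma^+$ if and only if $f$ is annihilated by the Hamiltonian vector field $X_{t_\gamma}$, so $\mathcal{O}_\gamma^+\cap\mathcal{O}_\delta^+$ is the ring of functions invariant under both twist flows, i.e. constant on the orbits of the pseudogroup they generate. On the smooth symplectic locus of $X(\Sigma)$ the common fibres are the orbits of the involutive distribution $\mathcal{D}$ spanned by $X_{t_\gamma}$, $X_{t_\delta}$ and their iterated Poisson brackets, and the Krull dimension equals the transcendence degree of the fraction field, namely $\dim\C[X(\Sigma)]-d=(6g-6)-d$, where $d$ is the generic rank of $\mathcal{D}$.

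The inequality then reduces to showing $d\ge 2$ in all cases. Since $\gamma$ and $\delta$ are distinct isotopy classes, the trace functions $t_\gamma,t_\delta$ are functionally independent, so $X_{t_\gamma}$ and $X_{t_\delta}$ are generically linearly independent and $d\ge 2$, giving $\dim(\mathcal{O}_\gamma^+\cap\mathcal{O}_\delta^+)\le 6g-8$. For the dichotomy I would invoke Goldman's Poisson bracket formula, which expresses $\{t_\gamma,t_\delta\}$ as a signed sum over the points of $\gamma\cap\delta$. When $i(\gamma,\delta)=0$ the curves are disjoint, the sum is empty, the flows commute, $\mathcal{D}$ has rank exactly $2$, and we get equality. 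When $i(\gamma,\delta)>0$ the bracket $\{t_\gamma,t_\delta\}$, which governs $[X_{t_\gamma},X_{t_\delta}]$, is a nonzero first-bracket direction, and showing it escapes the span of $X_{t_\gamma},X_{t_\delta}$ forces $d\ge 3$ and the strict inequality $\dim\le 6g-9$.

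The hard part will be this last step: proving that $t_\gamma$, $t_\delta$ and $\{t_\gamma,t_\delta\}$ are genuinely functionally independent (not merely that the bracket is nonzero) when the curves intersect, so that the orbit dimension really jumps to $3$. I expect to establish this by a local computation near a single intersection point, choosing a representation at which the three differentials $dt_\gamma$, $dt_\delta$, $d\{t_\gamma,t_\delta\}$ are visibly independent, while taking care that the Poisson-geometric orbit count, which lives on the smooth locus, indeed yields the Krull dimension of the (possibly non-finitely-generated) invariant subring through the transcendence-degree identity.
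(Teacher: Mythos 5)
Your setup is sound and agrees with the paper's: from nonnegativity and simplicity of $v_\gamma$ you correctly get $\mathcal{O}_\gamma^+\cap\mathcal{O}_\delta^+=\operatorname{Span}\{t_\Gamma\mid i(\gamma,\Gamma)=i(\delta,\Gamma)=0\}$, which is the coordinate ring of the image of the restriction $X(\Sigma)\to X(\Sigma')$ with $\Sigma'=\Sigma\setminus(\gamma\cup\delta)$, and your treatment of the disjoint case via the two commuting Goldman flows is exactly the paper's. But the proof is incomplete where it matters most: the implication ``$i(\gamma,\delta)>0$ implies strict inequality'' is reduced to the claim that $t_\gamma$, $t_\delta$ and $\{t_\gamma,t_\delta\}$ are functionally independent, and you explicitly defer that claim (``I expect to establish this by a local computation''). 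This is the entire content of the hard direction, and nothing in your sketch guarantees it: non-vanishing of the Goldman bracket does not prevent $\{t_\gamma,t_\delta\}$ from being a function of $t_\gamma$ and $t_\delta$, in which case $X_{\{t_\gamma,t_\delta\}}$ lies in the span of $X_{t_\gamma},X_{t_\delta}$ and the distribution does not grow. You would also need to control all iterated brackets, or at least exhibit one genuinely new direction, and then still justify that the Krull dimension of the (a priori not obviously finitely generated) invariant subring equals $6g-6$ minus the generic orbit dimension of the pseudogroup; you flag this last point but do not address it.

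The paper avoids all of this Poisson geometry. Having identified $\mathcal{O}_\gamma^+\cap\mathcal{O}_\delta^+$ with functions pulled back from $X(\Sigma')$, it bounds $\dim X(\Sigma')$ directly by a purely topological count: $\Sigma'$ is a disjoint union of surfaces with boundary, so $\dim X(\Sigma')=-3\chi(\Sigma')+3F$ where $F$ is the number of simply connected (polygonal) components, and $\chi(\Sigma')=\chi(\Sigma)+i(\gamma,\delta)$. The desired strict inequality reduces to $i(\gamma,\delta)>F$, which follows because a taut configuration $\gamma\cup\delta$ has all polygonal complementary components with at least $4$ corners, with equality forcing $\chi(\Sigma)=0$. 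If you want to salvage your route, I would recommend replacing the functional-independence claim by this Euler-characteristic argument, or by some other global count; as written, the crucial step is asserted rather than proved.
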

\begin{proof}
Suppose that $\gamma$ and $\delta$ intersect minimally and set $\Sigma'=\Sigma\setminus (\gamma\cup\delta)$. The ring $\mathcal{O}_\gamma^+\cap\mathcal{O}_\delta^+$ is the image of the natural map $\C[X(\Sigma')]\to\C[\Sigma]$. Geometrically, it is the coordinate ring for the image of the restriction map $X(\Sigma)\to X(\Sigma')$.

Consider first the case when $\gamma$ and $\delta$ are disjoint. Let $U\subset X(\Sigma)$ be the Zariski open set containing characters of irreducible representations $\rho$ satisfying $\Tr \rho(\gamma)\ne \pm 2 \ne \Tr \rho(\delta)$. The natural map $X(\Sigma)\to X(\Sigma')$ restricted to $U$ and corestricted to its image is a fibration whose fibers are the orbits of the (commuting) Goldman twist flows along $\gamma$ and $\delta$.
We deduce that the dimension of its image which is the dimension of $\mathcal{O}_\gamma^+\cap \mathcal{O}_\delta^+$ is equal to $6g-6-2$.

Now suppose $\gamma$ and $\delta$ intersect: it is sufficient to show $\dim X(\Sigma')<\dim X(\Sigma) - 2$. 
The surface $\Sigma'$ is a disjoint union of connected surfaces with connected boundary: their fundamental group is either trivial or free of rank $\ge 2$. Denoting $F$ be the number of simply connected components, we have $\dim X(\Sigma')=-3\chi(\Sigma')+3F$.

%
%


%
But $\chi(\Sigma')=\chi(\Sigma)+i(\gamma,\delta)$ so $\dim X(\Sigma') = \dim X(\Sigma)-3i(\gamma,\delta)+3F$, and we must show that $i(\gamma,\delta)>F$.
Since $\gamma \cup \delta$ is a taut union of simple curves, the polygonal components have at least $4$ corners, so $4F$ is smaller or equal to the total number of corners which is $4i(\gamma,\delta)$.
But $F=i(\gamma,\delta)$ implies that $\Sigma'$ is a union of quadrilaterals, so $F=\chi(\Sigma')=\chi(\Sigma)+i(\gamma,\delta)$ and thus $\chi(\Sigma)=0$ : a contradiction.
\end{proof}
Set $\Mod'(\Sigma)=\Mod(\Sigma)$ if $g\ge 3$ and $\Mod'(\Sigma)=\Mod(\Sigma)/\langle \tau\rangle$ if $g=2$ where $\tau$ denotes the hyperelliptic involution. 
\begin{Thm}
There is a natural split extension 
\[
0\to H^1(\Sigma,\Z/2\Z)\to \Aut(X(\Sigma))\to \Mod'(\Sigma)\to 0.
\]
\end{Thm}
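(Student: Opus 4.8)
The plan is to show that $\Aut(X(\Sigma))$ acts on $\Curv$ preserving disjointness, apply Ivanov's theorem to obtain a homomorphism to $\Mod'(\Sigma)$, and then identify its kernel with $H^1(\Sigma,\Z/2\Z)$. By Proposition \ref{preserve-curves} the set $\Curv$ is preserved; to see that disjointness is respected, note that if $\Phi\in\Aut(X(\Sigma))$ sends $v_\gamma$ to $v_{\gamma'}$ then, preserving both $\C[X(\Sigma)]$ and the valuation ring of $v_\gamma$, it carries $\mathcal{O}_\gamma^+$ to $\mathcal{O}_{\gamma'}^+$. Hence $\dim(\mathcal{O}_\gamma^+\cap\mathcal{O}_\delta^+)=\dim(\mathcal{O}_{\gamma'}^+\cap\mathcal{O}_{\delta'}^+)$, and Lemma \ref{preserve-disjointness} turns this into the criterion $i(\gamma,\delta)=0\iff i(\gamma',\delta')=0$. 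Ivanov's theorem then provides the map $\Aut(X(\Sigma))\to\Mod'(\Sigma)$, which is surjective and split because the tautological action $\Mod(\Sigma)\to\Aut(X(\Sigma))$ composes with it to the canonical projection $\Mod(\Sigma)\to\Mod'(\Sigma)$; for $g=2$ the hyperelliptic involution acts trivially on $\C[X(\Sigma)]$ (since $t_\alpha=t_{\alpha^{-1}}$ and it inverts up to conjugacy), so this factors through $\Mod'(\Sigma)$ and yields the section. Finally, the sign twists $t_\alpha\mapsto\epsilon(\alpha)t_\alpha$ for $\epsilon\in H^1(\Sigma,\Z/2\Z)$ multiply each $t_\alpha$ by $\pm1$, hence fix every valuation and lie in the kernel, injectively. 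Everything thus reduces to proving that the kernel is no larger than $H^1(\Sigma,\Z/2\Z)$.

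So let $\Phi$ act trivially on $\Curv$, i.e. fix $v_\gamma$ for every simple curve $\gamma$. First I would upgrade this to the statement that $\Phi$ fixes every $v_\lambda$ with $\lambda\in\ML$. Indeed the action of $\Aut(X(\Sigma))$ on $\Val$ commutes with positive scaling, so $\Phi$ fixes all weighted simple curves $Cv_\gamma=v_{C\gamma}$; these are dense in $\ML$, the action is continuous, and $\ML$ is closed in $\Val$ by Corollary \ref{simple=ML}, whence $\Phi$ fixes all of $\ML$. Now fix a strict positive $\lambda$ (Proposition \ref{most_ml=strict}) and expand $\Phi(t_\gamma)=\sum_\Gamma m_\Gamma t_\Gamma$. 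Simplicity of $v_\lambda$ gives $\max\{i(\lambda,\Gamma)\mid m_\Gamma\neq0\}=v_\lambda(\Phi(t_\gamma))=v_\lambda(t_\gamma)=i(\lambda,\gamma)$, and strictness forces the unique maximiser to be $\gamma$ itself. Letting $\lambda$ range over the dense set of strict positive laminations, every other $\Gamma$ with $m_\Gamma\neq0$ satisfies $i(\cdot,\Gamma)\le i(\cdot,\gamma)$ pointwise on $\ML$; testing against $\gamma$ (so $i(\gamma,\Gamma)=0$) and against curves disjoint from $\gamma$ but meeting any hypothetical other component then forces $\Gamma\in\{\gamma,\emptyset\}$. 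Hence $\Phi(t_\gamma)=m_\gamma t_\gamma+c_\gamma$ with $m_\gamma\neq0$, for every simple curve $\gamma$.

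To pin down the constants and leading coefficients I would feed these expressions into the Fricke relation. Choosing $\alpha,\beta$ with $i(\alpha,\beta)=1$, the three traces $t_\alpha,t_\beta,t_{\alpha\beta}$ are algebraically independent and the boundary $\delta$ of the one-holed torus they fill satisfies $t_\delta=\kappa(t_\alpha,t_\beta,t_{\alpha\beta})$ with $\kappa(x,y,z)=x^2+y^2+z^2-xyz-2$. Applying $\Phi$ and comparing $\kappa(\Phi t_\alpha,\Phi t_\beta,\Phi t_{\alpha\beta})$ with $\Phi(t_\delta)=m_\delta t_\delta+c_\delta$ inside this polynomial subring, matching monomials gives at once $c_\alpha=c_\beta=c_{\alpha\beta}=c_\delta=0$, $m_\delta=1$, $m_\alpha,m_\beta\in\{\pm1\}$ and $m_{\alpha\beta}=m_\alpha m_\beta$. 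As every non-separating curve sits in such a pair, I obtain $\Phi(t_\gamma)=m_\gamma t_\gamma$ with $m_\gamma\in\{\pm1\}$ for all non-separating $\gamma$ and $m_\gamma=1$ for separating ones, the assignment being multiplicative on once-intersecting pairs. A standard homological argument, using that $H_1(\Sigma,\Z/2\Z)$ is generated by classes of non-separating simple closed curves with relations realised by such configurations, then shows $m_\gamma=\epsilon([\gamma])$ for a unique $\epsilon\in H^1(\Sigma,\Z/2\Z)$. Since the traces of simple curves generate $\C[X(\Sigma)]$, the automorphism $\Phi$ coincides with the sign twist $\Psi_\epsilon$, so the kernel is exactly $H^1(\Sigma,\Z/2\Z)$.

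The main obstacle is the kernel computation of the previous two paragraphs rather than the appeal to Ivanov's theorem. Two points deserve care: the passage from ``$\Phi$ fixes every curve valuation'' to ``$\Phi$ fixes every measured lamination'', which rests on the density of weighted simple curves together with the continuity and scaling-equivariance of the action; and the leading-term analysis, where \emph{strictness} of a generic lamination is exactly what collapses $\Phi(t_\gamma)$ to a single multicurve plus a constant. The closing homological bookkeeping that identifies the sign data with a class in $H^1(\Sigma,\Z/2\Z)$ is routine but must be carried out with the correct generators and relations for $H_1(\Sigma,\Z/2\Z)$.
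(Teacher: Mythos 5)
Your proposal is correct and follows the same overall architecture as the paper: Proposition \ref{preserve-curves} plus Lemma \ref{preserve-disjointness} (via the invariance of $\mathcal{O}_\gamma^+$) give an action on $\Curv$ preserving disjointness, Ivanov's theorem produces the map to $\Mod'(\Sigma)$ with the tautological section, and the work is in the kernel. Where you diverge is in the two technical steps of the kernel computation. To collapse $\Phi(t_\gamma)$ to $m_\gamma t_\gamma+c_\gamma$ you first upgrade ``fixes all $v_\delta$'' to ``fixes all of $\ML$'' and then invoke a strict positive lamination; the paper gets the same conclusion more cheaply by evaluating only the curve valuations $v_\delta$ for $\delta$ disjoint from $\gamma$ (forcing every $\Gamma$ in the support to be parallel copies of $\gamma$) and then using that $\Phi$ restricts to an automorphism of $\C[t_\gamma]$. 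To pin down signs and constants you use the Fricke relation $t_\delta=\kappa(t_\alpha,t_\beta,t_{\alpha\beta})$ on a one-holed torus, which requires (and you should justify, e.g.\ by surjectivity of the restriction $X(\Sigma)\to X(T_{1,1})$) that $t_\alpha,t_\beta,t_{\alpha\beta}$ are algebraically independent; the paper instead applies $\Phi$ to the product-to-sum relation $t_\gamma t_\delta=t_{\gamma\delta}+t_{\gamma\delta^{-1}}$ and compares coefficients in the multicurve basis, which needs no independence statement. Both yield $c=0$, $m_\gamma=\pm1$ multiplicative, hence a class in $H^1(\Sigma,\Z/2\Z)$.

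One genuine imprecision: your Fricke argument only shows $m_\delta=1$ for $\delta$ bounding a one-holed torus, i.e.\ for genus-one separating curves, so the claim ``$m_\gamma=1$ for separating ones'' is not established for separating curves of higher genus, and your closing appeal to ``traces of simple curves generate'' silently uses those unverified values. The fix is the paper's: after twisting by the cohomology class, all $t_\gamma$ for $\gamma$ \emph{non-separating} are fixed, and these alone already generate $\C[X(\Sigma)]$ as an algebra (a trace-relation exercise), so the separating curves need never be examined. With that adjustment your argument is complete.
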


\begin{proof}
Let $\phi$ be an automorphism of $X(\Sigma)$. By Proposition \ref{preserve-curves} and Lemma \ref{preserve-disjointness}, $\phi$ acts on $\mathcal{C}$ by preserving the disjointness relation. Ivanov's Theorem (see \cite{Ivanov_curve-complex_1997} or \cite{Luo_complex-curves_2000}) implies that there is an element $g\in\Mod(\Sigma)$ which acts in the same way: this defines the map $\Aut(X(\Sigma))\to\Mod(\Sigma)$.
As $\Mod(\Sigma)$ naturally acts on $X(\Sigma)$ by sending $t_\gamma$ to $t_{\phi(\gamma)}$, there is a natural section of the previous map. Its kernel consists in mapping classes which fix all non-oriented simple curves. This reduces to the identity or the hyperelliptic involution in genus 2 (see for instance \cite{FaMar_primer-map_2012}) and gives the surjection $\Aut(X(\Sigma))\to \Mod'(\Sigma)$.

Consider an element $\phi\in\Aut(X(\Sigma))$ which acts trivially on $\mathcal{C}$. Fix a simple curve $\gamma$ and consider the possible values of $\phi(t_\gamma)$. As $\phi$ preserves the valuations $v_\delta$ for any simple curve $\delta$ we get $v_\delta(\phi(t_\gamma))=v_\delta(t_\gamma)=i(\delta,\gamma)$. 
Write $\phi(t_\gamma)=\sum m_\Gamma t_\Gamma$ : for any simple curve $\delta$ which does not intersect $\gamma$ one has $0=v_\delta(\phi(t_\gamma))=\max\{i(\delta,\Gamma)\mid m_\gamma\ne 0\}$, so every $\Gamma$ such that $m_\Gamma \ne 0$ must be a family of parallel copies of $\gamma$. Hence $\phi(t_\gamma)$ is a polynomial in $t_\gamma$ and as $\phi$ induces an automorphism of the subalgebra $\C[t_\gamma]$, one has $\phi(t_\gamma)=a_\gamma t_\gamma+b_\gamma$ with $a_\gamma\ne 0$.

Concentrate first on the subset of non-separating simple curves. For such elements $\gamma$ and $\delta$ intersecting once, applying $\phi$ to the relation $t_\gamma t_\delta=t_{\gamma\delta}+t_{\gamma\delta^{-1}}$ we get $b_\gamma=b_\delta=0$ and $a_\gamma a_\delta=a_{\gamma\delta}=a_{\gamma\delta^{-1}}$. The curves $\gamma\delta$ and $\gamma\delta^{-1}$ are homologous modulo 2 and the previous relation implies that $a$ factors as a map $a:H_1(\Sigma,\Z/2\Z)\to \C^*$. The equation $a_\gamma a_\delta=a_{\gamma\delta}$ implies that $a$ is a morphism, that is an element of $H^1(\Sigma,\Z/2\Z)$. 

Since this group acts on $\C[X(\Sigma)]$ by sending $t_\gamma$ to $a_\gamma t_\gamma$, we are reduced to the case where all $t_\gamma$ for $\gamma$ non-separating are fixed. A simple exercise in trace relations shows that these elements generate $\C[X(\Sigma)]$ as an algebra, therefore $\phi=a$ as desired.
\end{proof}

\bibliographystyle{alpha}
\bibliography{acv.bib}

\begin{thebibliography}{Pap86b}

\bibitem[AS16]{Aramayona-souto_rigidity-map_2016}
J.~Aramayona and J.~Souto.
\newblock Rigidity phenomena in the mapping class group.
\newblock In {\em Handbook of {T}eichm\"{u}ller theory. {V}ol. {VI}}, volume~27
  of {\em IRMA Lect. Math. Theor. Phys.}, pages 131--165. Eur. Math. Soc.,
  Z\"{u}rich, 2016.

\bibitem[CM12]{CharleMar_independant-trace-su2_2012}
L.~Charles and J.~March\'e.
\newblock Multicurves and regular functions on the representation variety of a
  surface in $su(2)$.
\newblock {\em Comment. Math. Helv.}, 87:409--431, 2012.

\bibitem[CP17]{ConciniProcesi_invariant-theory_2017}
C.~De Concini and C.~Procesi.
\newblock {\em The invariant theory of matrices}, volume~69 of {\em University
  Lecture Series}.
\newblock American Mathematical Society, Providence, RI, 2017.

\bibitem[EH74]{El-Huti}
M.~H. \`El'-Huti.
\newblock Cubic surfaces of {M}arkov type.
\newblock {\em Mat. Sb. (N.S.)}, 93(135):331--346

\bibitem[FLP79]{F-L-P_travaux-thurston_1979}
A.~Fathi, F.~Laudenbach, and V.~Po\'enaru.
\newblock {\em Travaux de {Thurston} sur les surfaces}.
\newblock Number 66-67 in Astérisque. SMF, 1979.

\bibitem[FM12]{FaMar_primer-map_2012}
B.~Farb and D.~Margalit.
\newblock {\em A primer in mapping class groups}, volume~49.
\newblock PUP, 2012.

\bibitem[Iva97]{Ivanov_curve-complex_1997}
N.~Ivanov.
\newblock Automorphisms of complexes of curves and of teichm\"uller spaces.
\newblock {\em International Mathematical Research Notices}, 14:651--666, 1997.

\bibitem[Luo00]{Luo_complex-curves_2000}
F.~Luo.
\newblock Automorphisms of the complex of curves.
\newblock {\em Topology}, 39:283--298, 2000.

\bibitem[Mar16]{Marche_sl2-skein_2016}
J.~March\'e.
\newblock Character varieties in sl$_2$ and skein algebras.
\newblock In {\em Topology, Geometry and Algebra of low dimensional manifolds,
  RIMS, Kyoto}, 25-29 may 2016.

\bibitem[Mas82]{Masur_interval-exchange_1982}
H.~Masur.
\newblock Interval exchange transformations and measured foliations.
\newblock {\em Ann. of Math. (2)}, 115(1):169--200, 1982.

\bibitem[MO93]{Morgan-Otal_1993}
J.~Morgan and J.-P. Otal.
\newblock Relative growth rates of closed geodesics on closed surfaces.
\newblock {\em Comment Math. Helvetici}, 68:171--208, 1993.

\bibitem[Ota12]{Otal_compact-repres_2012}
J.-P. Otal.
\newblock Compactification of spaces of representations, after {C}uller,
  {M}organ and {S}halen.
\newblock In {\em Berkovich Spaces and Applications}. A. Ducros, C. Favre, and
  J. Nicaise, 2012.

\bibitem[Pap86a]{Papa_remarques-ml_1986}
A.~Papadopoulos.
\newblock Deux remarques sur la g\'eom\'etrie symplectique de l'espace des
  feuilletages mesurés sur une surface.
\newblock {\em Annales de l'institut Fourier}, 36(2):127--141, 1986.

\bibitem[Pap86b]{Papa_intersection-hamiltonian_1986}
A.~Papadopoulos.
\newblock Geometric intersection functions and hamiltonian vector fields on the
  space of measured laminations.
\newblock {\em Pacific journal of mathematics}, 124(2):375--402, 1986.

\bibitem[PS00]{PrSi_sl2-skein_2000}
J.~Przytycki and A.~Sikora.
\newblock On skein algebras and {${\rm Sl}_2({\bf C})$}-character varieties.
\newblock {\em Topology}, 39(1):115--148, 2000.

\bibitem[PS19]{PrSi_skein_2019}
J.~Przytycki and A.~Sikora.
\newblock Skein algebras of surfaces.
\newblock {\em Trans. Amer. Math. Soc.}, 371(2):1309--1332, 2019.

\bibitem[Sko96]{Skora_splittings_1996}
R.~Skora.
\newblock Splittings of surfaces.
\newblock {\em J. Amer. Math. Soc.}, 9(2):605--616, 1996.

\bibitem[Thu09]{Dylan_Thurston_intersection_2009}
D.~Thurston.
\newblock Geometric intersection of curves on surfaces.
\newblock preprint, 2009.

\bibitem[Vaq07]{Vaquie_extension-valuation_2007}
M.~Vaquié.
\newblock Extension d'une valuation.
\newblock {\em Trans. Amer. Math. Soc.}, 359(7):3439--3481, 2007.

\end{thebibliography}

\end{document}